\documentclass[oneside,a4paper,leqno,10pt]{amsart}
\usepackage{graphicx}
\usepackage{amsmath}	
\usepackage{amsfonts}
\usepackage{amssymb}
\usepackage{amsthm}
\usepackage{mathrsfs}
\usepackage{CJK}
\usepackage{aliascnt}
\usepackage{latexsym,amsfonts,amssymb,amsmath,amscd}
\usepackage{graphicx,color,ifpdf}
\usepackage{fancyhdr}
\allowdisplaybreaks
\usepackage[linktocpage, colorlinks, backref = page, citecolor = blue, linkcolor = blue]{hyperref}

\newcommand {\Isom }{\mathrm{Isom}}

\newcommand {\inj}{\mathrm{inj}}
\newcommand {\vol }{\mathrm{vol}}
\newcommand {\id }{\mathrm{id}}
\newcommand {\myd }{\;\mathrm{d}}
\newcommand {\supp }{\mathrm{supp}}
\newcommand {\Hess}{\mathrm{Hess}}

\newcommand {\R}{\mathbb{R}}

\newcommand {\Ric}{\mathrm{Ric}}
\newcommand {\myarrow}[1]{\mathop{\longrightarrow}\limits^{#1}}
\newcommand {\mysup}[2]{\sup\limits_{#1}{#2}}
\newcommand{\Xint}[1]{\mathchoice
	{\XXint\displaystyle\textstyle{#1}}%
	{\XXint\textstyle\scriptstyle{#1}}%
	{\XXint\scriptstyle
		\scriptscriptstyle{#1}}%
	{\XXint\scriptscriptstyle
		\scriptscriptstyle{#1}}%
	\!\int}
\newcommand{\XXint}[3]{{
		\setbox0=\hbox{$#1{#2#3}{\int}$}
		\vcenter{\hbox{$#2#3$}}\kern-.5\wd0}}
\newcommand{\dashint}{\Xint-}

\input xy
\xyoption{all}

\newtheorem{theorem}{Theorem}[section]
\newaliascnt{lemma}{theorem}
\newtheorem{lemma}[lemma]{Lemma}
\aliascntresetthe{lemma}

\newaliascnt{corollary}{theorem}
\newtheorem{corollary}[corollary]{Corollary}
\aliascntresetthe{corollary}

\theoremstyle{definition}
\newaliascnt{definition}{theorem}
\newtheorem{definition}[definition]{Definition}
\aliascntresetthe{definition}

\newtheorem*{acknowledgements}{Acknowledgements}

\theoremstyle{remark}
\newaliascnt{remark}{theorem}
\newtheorem{remark}[remark]{Remark}
\aliascntresetthe{remark}

\title{FIBRATIONS, AND STABILITY FOR COMPACT GROUP ACTIONS ON MANIFOLDS WITH LOCAL BOUNDED RICCI COVERING GEOMETRY}
\author{Hongzhi Huang}
\email{hyyqsaax@163.com}
\address{School of Mathematical Sciences, Capital Normal University, Beijing, China}

\pagestyle{plain}

\begin{document}

	\maketitle
	\thispagestyle{empty}

	\begin{abstract}
		In the study of collapsed manifolds with bounded sectional curvature, the following two results provide basic tools: a (singular) fibration theorem (\cite{Fu1}, \cite{CFG}), and the stability for isometric compact Lie group actions on manifolds (\cite{Pa},\cite{GK}). The main results in this paper
(partially) generalize the two results to manifolds with local bounded Ricci covering geometry.
	\end{abstract}
	
	\setcounter{section}{-1}
	
	\section{Introductions}

	In the study of collapsed manifolds with bounded sectional curvature ($|\sec|\le1$), the following two results provide basic tools: a (singular) fibration theorem (\cite{Fu1}, \cite{CFG}), and the stability for isometric compact Lie group actions on manifolds (\cite{Pa},\cite{GK}).
	
	The main purpose of this paper is to (partially) generalize the two results to
	manifolds with local bounded Ricci covering geometry (Definition \ref{LBRCG}). Our two generalized 
	results have been used in a recent work by Rong (\cite{Ro4}).
	
	A fibration theorem in \cite{Fu1} (\cite{CFG}) says:
	
	\begin{theorem} \label{FukayaFibration}
		
		Given constants, $n, i_0>0$, there exist constant $\delta(n,i_0)>0$ (depending on $n$ and $i_0$) and $C(n)>0$, such that if a compact $n$-manifold $M$ and a compact $k$-manifold $N$, $k\le n$, satisfy
		$$|\sec_{M}|\le 1,\quad |\sec_N|\le 1, \quad \inj_N\ge i_0,\quad d_{GH}(M,N)\le\delta\le\delta(n,i_0),$$where $\inj_N$ denotes the injectivity radius of $N$ and $d_{GH}$ denotes the Gromov-Hausdorff distance,
		then there is a smooth fiber bundle map, $f: M\to N$, such that
		
		(\ref{FukayaFibration}.1) $f$ is a $\Psi(\delta|n,i_0)$-GHA (Gromov-Hausdorff approximation),
		where the function $\Psi(\delta|n,i_0)\to 0$ as $\delta\to 0$ while $n$ and $i_0$ are fixed.

		(\ref{FukayaFibration}.2) $f$ is a $\Psi(\delta|n,i_0)$-Riemannian submersion, i.e., $e^{-\Psi(\delta|n,i_0)}\le\frac{|\myd f_x(\xi)|}{|\xi|}\le e^{\Psi(\delta|n,i_0)}$
		where $\xi\in T_xM$ is perpendicular to the $f$-fiber at $x$.
		
		(\ref{FukayaFibration}.3) The second fundamental form of each $f$-fiber $|\mathrm{II}_{f}|\le C(n)$.
	\end{theorem}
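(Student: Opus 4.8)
The plan is to smooth the given Gromov--Hausdorff approximation into a genuine Riemannian submersion by a Riemannian center-of-mass construction, following the scheme of Fukaya and of Cheeger--Fukaya--Gromov. By hypothesis there is a (not necessarily continuous) $\delta$-GHA $h\colon M\to N$; the goal is to replace it by a smooth $f$ realizing the three conclusions. First I would fix a small scale $r=r(n,i_0,\delta)$ (with $r\to0$ as $\delta\to0$ while $n,i_0$ stay fixed) and a maximal $r$-separated net $\{q_\alpha\}_{\alpha\in A}$ in $N$. Since $|\sec_N|\le1$ and $\inj_N\ge i_0$, volume comparison makes $\{B(q_\alpha,r)\}$ a cover of $N$ of multiplicity at most $N_0(n)$, and on each $B(q_\alpha,2r)$ the inverse exponential chart $u_\alpha=\exp_{q_\alpha}^{-1}\colon B(q_\alpha,2r)\to\R^k$ is defined, bi-Lipschitz with constant $1+O(r^2)$, and has transition maps $u_\beta\circ u_\alpha^{-1}$ with first and second derivatives controlled by $n$ and $i_0$. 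I also fix a smooth partition of unity $\{\rho_\alpha\}$ on $M$ with $\supp\rho_\alpha\subset h^{-1}(B(q_\alpha,r))$, obtained by pulling a partition of unity subordinate to $\{B(q_\alpha,r)\}$ back by $h$ and mollifying.

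Next I would build smoothed local maps and glue them. For each $\alpha$ put $\varphi_\alpha=u_\alpha\circ h$ on $h^{-1}(B(q_\alpha,2r))$, valued in $\R^k$, and mollify $\varphi_\alpha$ in suitable local charts of $M$ to get a smooth $\widetilde\varphi_\alpha$ that is $\Psi$-close to $\varphi_\alpha$, is a $\Psi$-Riemannian submersion, and has controlled $\nabla^2\widetilde\varphi_\alpha$; here the mollification must be carried out on local covers of $M$ at the scale of the local injectivity radius --- where $M$ unwraps to bounded, non-collapsed geometry --- and then pushed back down, since $M$ itself may be arbitrarily collapsed and carries no uniform smoothing scale. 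Set $p_\alpha(x)=\exp_{q_\alpha}(\widetilde\varphi_\alpha(x))\in N$, which is defined because $|\widetilde\varphi_\alpha(x)|<\inj_N$; for fixed $x$ all the points $p_\alpha(x)$ lie within $O(\Psi)$ of $h(x)$, hence in a common ball of radius $\ll\inj_N$. Now define $f\colon M\to N$ by letting $f(x)$ be the Riemannian center of mass of $\{p_\alpha(x)\}_\alpha$ weighted by $\{\rho_\alpha(x)\}_\alpha$, that is, the minimizer of
\[
q\ \longmapsto\ \sum_{\alpha}\rho_\alpha(x)\, d_N\!\big(q,p_\alpha(x)\big)^2
\]
over that ball. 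By Karcher's theorem (using $|\sec_N|\le1$, $\inj_N\ge i_0$, and the $O(\Psi)$-closeness of the points) this functional is strictly convex there, so the minimizer exists and is unique, and the implicit function theorem, applied to the critical-point equation $\sum_\alpha\rho_\alpha(x)\exp_{f(x)}^{-1}(p_\alpha(x))=0$, shows that $f$ is smooth.

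It then remains to verify the conclusions. Conclusion (\ref{FukayaFibration}.1) is immediate, since $d_N(f(x),h(x))=O(\delta+\Psi)$ and $h$ is a $\delta$-GHA. For (\ref{FukayaFibration}.2) and (\ref{FukayaFibration}.3) I would differentiate the critical-point equation: one finds that $df_x$ is a weighted average of the differentials $d(\widetilde\varphi_\alpha)_x$ --- transported to $T_{f(x)}N$ by differentials of $\exp$ that are close to isometries --- plus an error carrying the factors $\nabla\rho_\alpha$ against the mutual discrepancies $\exp_{f(x)}^{-1}p_\alpha(x)=O(\Psi)$. Hence $df_x$ is, on the orthogonal complement of its kernel, $\Psi$-close to a linear isometry, which is (\ref{FukayaFibration}.2); in particular $df_x$ is onto for $\delta$ small, so $f$ is a submersion and, $M$ being compact, a smooth fiber bundle by Ehresmann's theorem. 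Differentiating once more expresses $\nabla df_x$ --- and hence the second fundamental form of the fiber through $x$, since along $f^{-1}(q)$ one has $df(\nabla_X Y)=-(\nabla df)(X,Y)$ for $X,Y$ tangent to the fiber --- through $\nabla^2\widetilde\varphi_\alpha$, the derivatives of $\rho_\alpha$, and the Hessian of the squared distance function of $N$, yielding $|\mathrm{II}_f|\le C(n)$ once the whole construction is organized to be natural at the scale of the collapse.

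The main obstacle is exactly this last requirement together with the construction of the $\widetilde\varphi_\alpha$: one must choose the net scale $r$, the mollification scale, and the threshold $\delta(n,i_0)$ so that the estimates of (\ref{FukayaFibration}.2) (which must vanish as $\delta\to0$) and of (\ref{FukayaFibration}.3) (which must be uniform, and depend only on $n$) hold simultaneously, and to do this one is forced to perform the entire construction equivariantly on local covers, where $M$ is non-collapsed, and then descend --- this local-cover analysis is the technical heart of the Cheeger--Fukaya--Gromov argument, and it is also what removes the dependence on $i_0$ from (\ref{FukayaFibration}.3).
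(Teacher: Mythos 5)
The paper does not prove Theorem~\ref{FukayaFibration}; it is quoted from \cite{Fu1} and \cite{CFG} as background. What the paper does prove is the Ricci generalization Theorem~\ref{Main}, and there the mechanism is instructive to compare with yours: rather than mollifying $u_\alpha\circ h$ on local covers, the paper takes $(\delta,k)$-splitting maps (harmonic, with $L^2$ Hessian bounds coming from elliptic estimates and the Bochner formula) as the local charts, and glues them by center of mass exactly as you do. The splitting maps are what replace your $\widetilde\varphi_\alpha$, and notably the paper's generalization drops conclusion (\ref{FukayaFibration}.3) entirely --- the pointwise bound on the second fundamental form is tied specifically to the two-sided sectional curvature bound and the frame-bundle/local-cover machinery of \cite{CFG}.

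Within your own sketch, the substantive gap is the construction of $\widetilde\varphi_\alpha$. You assert that mollifying $\varphi_\alpha=u_\alpha\circ h$ on local covers produces a map that is a $\Psi$-Riemannian submersion with $|\nabla^2\widetilde\varphi_\alpha|\le C(n)$, but this is precisely the theorem's content and cannot be taken as a given. First, $h$ is only a Gromov--Hausdorff approximation (not even continuous), so one must argue why averaging it at some scale $s$ produces first derivatives that are $\Psi$-close to a coisometric projection; this forces $s\gg\delta$. Second, the Hessian of the mollified map is of order $\delta/s^2$, and to make that $O(1)$ one must take $s\sim\sqrt\delta$; but one must then check that the weighted center of mass of the $p_\alpha(x)$ still tracks $h(x)$ to accuracy $\Psi(\delta)$ and that the $\nabla\rho_\alpha$ terms --- which carry a factor $s^{-1}$ against discrepancies of size $\Psi$ --- don't destroy the estimates. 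Getting both (\ref{FukayaFibration}.2) (an error that vanishes with $\delta$) and (\ref{FukayaFibration}.3) (a bound depending only on $n$) simultaneously is exactly what the $O(n)$-equivariant analysis on frame bundles in \cite{CFG} is built to do, and your sketch acknowledges this as ``the technical heart'' but does not supply an argument. As written, the step from ``mollify'' to ``bounded second fundamental form'' is a gap, not a detail.
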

	
	Note that (\ref{FukayaFibration}.1) and (\ref{FukayaFibration}.3) imply that the induced metric on an $f$-fiber is almost flat, and thus each $f$-fiber is diffeomorphic to an infra-nilmanifold (\cite{Gr}, \cite{Ru}). If one replaces $N$ by a compact length space $X$ and $\delta=
	\delta(X)$, then there is a singular fibration map, $f: M\to X$ (\cite{Fu2}, \cite{CFG}).
	
	For a $r$-distance ball at $x\in M$, $B_r(x)$, let $\pi: (\widetilde {B_r(x)},\tilde x)\to (B_r(x),x)$,
	denote the (incomplete) Riemannian universal cover. Put $\widetilde{\vol}(B_r(x))\triangleq \vol(B_r(\tilde x))$, called the local rewinding volume of $B_r(x)$ (\cite{Ro2}). Then $\widetilde{\vol}(B_r(x))\ge \vol(B_r(x))$
	and ``$=$'' if and only if $B_r(x)$ is simply connected. 
	\begin{definition}\label{LBRCG}
		An $n$-manifold $M$ is said to satisfy a local $(r,v)$-bound Ricci covering geometry, if there are constants, $r,  v>0$, such that
		$$\Ric_M\ge -(n-1),\quad \widetilde{\vol}(B_r(x))\ge v,\quad \forall \, x\in M.$$
	\end{definition}

	Based on Cheeger-Colding theory on manifolds of Ricci curvature bounded below (\cite{CC1}-\cite{CC3}, \cite{Ch}), around 2014 Rong proposed to investigate the class of (collapsed) manifolds with local bounded Ricci covering geometry, partially because such (collapsed) Ricci limit spaces share similar local
	geometric and topological properties (e.g. any tangent cone is a metric cone, \cite{Ro2}).
	Since then, there have been many progress, cf., \cite{CRX1}, \cite{CRX2},  \cite{HKRX}, \cite{HR}, \cite{Pa1}, \cite{Pa2}, \cite{PR}, \cite{Ro2}-\cite{Ro4}.

	Our first main result generalizes Theorem \ref{FukayaFibration} to manifolds with local bounded Ricci covering geometry.
	
	\begin{theorem}\label{Main}
		
		Given constants, $n, i_0, v>0$, there exists a constant $\delta(n,i_0,v)>0$, such that if a compact $n$-manifold $M$ and a compact $k$-manifold $N$, $k\le n$, satisfy,
		$$\Ric_{M}\ge-(n-1),\, \widetilde{\vol}(B_1(x))\ge v,\forall \, x\in M$$$$|\sec_N|\le 1,\, \inj_N\ge i_0,\, d_{GH}(M,N)\le\delta\le\delta(n,i_0,v),$$
		then there is a smooth fiber bundle map, $f: M\to N$, such that $f$ is a $\Psi(\delta|n,i_0,v)$-GHA.
	\end{theorem}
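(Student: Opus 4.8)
The plan is to produce a smooth map $f\colon M\to N$ that is simultaneously a submersion and a $\Psi(\delta|n,i_0,v)$-GHA; since $M$ is compact, Ehresmann's fibration theorem (applied over each component of $N$) then promotes $f$ to a smooth fiber bundle map, which is the assertion. Note that only this weaker conclusion is claimed, so neither a bound on the second fundamental forms of the fibers nor the almost Riemannian submersion property of Theorem \ref{FukayaFibration} is needed. From $d_{GH}(M,N)\le\delta$ one first obtains a Lipschitz $\Psi$-GHA $h\colon M\to N$, by composing a $\delta$-GHA with a mollification in the target (legitimate because $|\sec_N|\le1$ and $\inj_N\ge i_0$). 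Cover $N$ by balls $B_{i_0/20}(p_\alpha)$ of multiplicity $\le C(n)$ so that each $B_{i_0/10}(p_\alpha)$ carries harmonic coordinates $\phi_\alpha\colon B_{i_0/10}(p_\alpha)\to\R^k$ with uniform two-sided and $C^{1,\beta}$ bounds (the harmonic radius of $N$ is $\gtrsim i_0$), write $u_{\alpha,j}:=(\phi_\alpha\circ h)_j$ on $V_\alpha:=h^{-1}(B_{i_0/10}(p_\alpha))$, and fix a subordinate partition of unity $\{\rho_\alpha\}$ on $N$. The plan is to replace each $u_{\alpha,j}$ by a smooth $b_{\alpha,j}$ with $b_{\alpha,j}=u_{\alpha,j}+O(\Psi)$ whose gradients $\nabla b_{\alpha,1},\dots,\nabla b_{\alpha,k}$ form a nondegenerate frame, and then to let $f(x)$ be the Riemannian center of mass in $N$ (well defined since $\inj_N\ge i_0$) of the points $\{\phi_\alpha^{-1}(b_\alpha(x))\}$ weighted by $\{\rho_\alpha(h(x))\}$, over the $\alpha$ with $h(x)\in B_{i_0/20}(p_\alpha)$.

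Granting such $b_{\alpha,j}$, the rest is routine. Each $\phi_\alpha^{-1}(b_\alpha(x))=h(x)+O(\Psi)$ in $N$, so the averaged points lie within $\Psi$ of one another and of $h(x)$; hence the center of mass is defined, equals $h(x)+O(\Psi)$, and $f$ inherits the $\Psi$-GHA property from $h$. The frames $\nabla b_{\alpha,\cdot}$ are mutually $\Psi$-close, since each computes, through $N$'s uniformly $C^{1,\beta}$ transition maps, the projection onto the same intrinsic ``almost Euclidean'' directions; so $df_x$ -- a slowly varying convex combination of the nondegenerate maps $d(\phi_\alpha^{-1}\circ b_\alpha)|_x$, with error terms controlled by the center-of-mass estimates of \cite{GK} -- is surjective everywhere, i.e.\ $f$ is a submersion. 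Thus everything reduces to constructing the $b_{\alpha,j}$, and this is where Definition \ref{LBRCG} enters.

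Fix a small scale $r_0=r_0(n,i_0,v)$, and for $x\in V_\alpha$ let $\pi_x\colon(\widetilde{B_{r_0}(x)},\tilde x)\to(B_{r_0}(x),x)$ be the incomplete Riemannian universal cover, with deck group $\Gamma_x$. By Definition \ref{LBRCG}, relative volume comparison, and the fact that the universal cover of a smaller ball dominates it, $\widetilde{\vol}(B_{r_0}(x))\ge v_0(n,i_0,v)>0$, so $(\widetilde{B_{r_0}(x)},\tilde g)$ is non-collapsed with $\Ric\ge-(n-1)$ (cf.\ \cite{Ro2}, \cite{HKRX}). Since $B_{r_0}(x)$ is $(\Psi+Cr_0^2)$-close to the Euclidean $r_0$-ball -- from $d_{GH}(B_{r_0}(x),B_{r_0}(h(x)))\le\Psi$ and $|\sec_N|\le1$ -- it carries $k$ pairs of points realizing, up to error $\Psi+Cr_0^2$, the hypotheses of the Cheeger--Colding almost splitting theorem; lifting these (segments lift to segments, distances preserved up to $\Psi$) and applying almost splitting on the non-collapsed cover \cite{CC1}, $B_{r_0/2}(\tilde x)$ is $\Psi$-close to $B^k_{r_0/2}(0)\times Z$ for a compact space $Z$, with the lifted $\widetilde u_{\alpha,j}$ being $C^0$-$O(\Psi+Cr_0^2)$-close to the coordinates $\ell_1,\dots,\ell_k$ of the Euclidean factor. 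Unlike in the two-sided-curvature setting, the non-collapsed cover has no a priori curvature or harmonic-radius bound, so one first evolves $\tilde g$ by the Ricci flow for a definite time $t=t(n,i_0,v)$ -- possible precisely because $\widetilde{B_{r_0}(x)}$ is non-collapsed with $\Ric\ge-(n-1)$ (cf.\ \cite{HKRX}) -- obtaining a $\Gamma_x$-equivariant metric $\tilde g(t)$ that is $\le C(n,i_0,v)\sqrt t$-GH-close to $\tilde g$, still almost $\R^k$-split (the factor $\R^k$ is Ricci flat), and has $|\sec|\le\Lambda(n,i_0,v)$, hence a uniform harmonic radius, on the $\Gamma_x$-invariant set $\pi_x^{-1}(B_{r_0/4}(x))$. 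Let $\widetilde b_{\alpha,j}$ be the $\tilde g(t)$-harmonic function there with boundary values $\widetilde u_{\alpha,j}$: it is $C^{1,\beta}$ with uniform bounds (elliptic regularity), $C^0$-$O(\Psi+Cr_0^2+\sqrt t)$-close to $\ell_j$ (it is the Cheeger--Colding harmonic approximation of the splitting coordinate), and satisfies the quantitative estimates $\|\Hess\widetilde b_{\alpha,j}\|_{L^2}+\|\langle\nabla\widetilde b_{\alpha,i},\nabla\widetilde b_{\alpha,j}\rangle-\delta_{ij}\|_{L^2}\le\Psi$; the uniform $C^{1,\beta}$ bound converts these into the pointwise estimate $|\nabla\widetilde b_{\alpha,j}(\tilde x)-\nabla\ell_j|\le\tfrac1{10}$ once $r_0$, $t$, $\delta$ are chosen small enough. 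Being $\Gamma_x$-invariant, $\widetilde b_{\alpha,j}$ descends to a smooth $b^{(x)}_{\alpha,j}$ on $B_{r_0/4}(x)\subset M$, with $db^{(x)}_{\alpha,\cdot}|_x$ nondegenerate. Finally, these local pieces are mutually $\Psi$-close in $C^1$ on overlaps (same boundary data up to $\Psi$, same elliptic equation, uniform bounds), so patching them by a partition of unity on $M$ at scale $r_0$, of multiplicity $\le C(n)$ by volume comparison, produces the required $b_{\alpha,j}$ on $V_\alpha$.

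The main obstacle is precisely this third step. On the collapsed $M$ only $\Ric\ge-(n-1)$ is available, so there is no uniform harmonic radius and no $C^1$-control of mollified functions; all the regularity has to be produced on the non-collapsed local universal covers, where -- because the cover itself carries no upper curvature bound, in contrast with the two-sided-curvature case treated in \cite{CFG} -- one must first regularize by a short-time Ricci flow before Cheeger--Colding's almost splitting can be used pointwise. Carrying this out quantitatively forces one to interlock three scales, chosen in order: $r_0=r_0(n,i_0,v)$ (so small that $N$ is genuinely almost Euclidean at that scale and the harmonic-function perturbation estimates close up), then the smoothing time $t=t(n,i_0,v)$ (small relative to $r_0^2$ so that the Ricci-flowed cover stays close to the split model, yet large enough that its harmonic radius dominates the remaining error), then $\delta\le\delta(n,i_0,v)$ (small relative to both); and then one must check that the pointwise nondegeneracy survives the descent to $M$ and the two partition-of-unity patchings. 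Everything else is the center-of-mass machinery of \cite{GK}, \cite{CFG}.
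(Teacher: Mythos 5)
Your proposal differs from the paper's argument in one decisive and problematic place: the regularization of the local universal cover by short-time Ricci flow. You assert that since $\widetilde{B_{r_0}(x)}$ is non-collapsed with $\Ric\ge-(n-1)$, one can flow $\tilde g$ for a definite time $t(n,i_0,v)$ and obtain $|\sec_{\tilde g(t)}|\le\Lambda(n,i_0,v)$. This is not a known consequence of the stated hypotheses; a lower Ricci bound plus a lower volume bound does not yield short-time existence of Ricci flow with a sectional-curvature bound at a definite scale. The smoothing results invoked in the literature cited in this paper (\cite{DWY}, \cite{HKRX}, \cite{PWY}, \cite{CRX1}) all require strictly stronger hypotheses (two-sided Ricci bounds, sectional curvature bounded below, or almost-Euclidean/isoperimetric control, etc.), and the paper's Remark after Theorem~\ref{Main} explicitly flags exactly this obstruction: ``because of the weak regularity, the present smoothing techniques do not seem to apply to our circumstance.'' So the main lemma your construction rests on, namely that the lifted functions can be replaced by $C^{1,\beta}$-controlled harmonic functions on a flowed metric, is precisely the step that is not available here.

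The paper's actual route avoids smoothing entirely. It takes $(\delta,k)$-splitting maps $u_\lambda:B_4(p_\lambda)\to T_{h(p_\lambda)}N$ (Theorem~\ref{splittingthm}), glues $f_\lambda=\exp_{h(p_\lambda)}\circ u_\lambda$ by a Riemannian center of mass over a cut-off subordinate to the $u_\lambda$ themselves ($\phi_\lambda=\phi\circ r_\lambda\circ u_\lambda$), and reduces non-degeneracy of $f$ to the non-degeneracy of a single $(\Psi,k)$-splitting map. The latter is the content of Lemma~\ref{NonDegenaracyofSplittingMap}, whose proof is the crux: lift to the local universal cover, use Lemma~\ref{RegularPoints} and the non-collapsedness of the cover to see that tangent cones there are $\R^n$, extend the lifted $(\delta,k)$-splitting map to a $(\Psi,n)$-splitting map, and then apply the Canonical Reifenberg Theorem of Cheeger--Jiang--Naber (Theorem~\ref{CanonicalReifenbergThm}) --- which gives pointwise non-degeneracy of $(\delta,n)$-splitting maps on non-collapsed balls with only a Ricci lower bound, no two-sided curvature bound needed. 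That replaces the role your Ricci flow was supposed to play. You never invoke \cite{CJN}, and without it your proof has no way to get from ``almost-split on the cover'' to ``pointwise non-degenerate differential.''

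There is a secondary gap in your gluing step. You patch the locally defined $b^{(x)}_{\alpha,j}$ by a partition of unity at scale $r_0$ on $M$ itself. Differentiating such a patching produces terms $\nabla\chi\cdot(b^{(x)}-b^{(x')})$, and with only $\Ric\ge-(n-1)$ on $M$ you have no uniform gradient control on a generic metric cut-off at that scale, nor a small enough $C^0$-closeness to beat it; the asserted ``mutual $\Psi$-closeness in $C^1$'' is claimed, not derived. The paper sidesteps this by making the cut-off functions $\phi_\lambda=\phi\circ r_\lambda\circ u_\lambda$ functions \emph{of the splitting maps}, so that in the Jacobian computation (Lemma~\ref{Jacobian}) the derivative of the cut-off is absorbed back into $\myd u_\lambda=\myd(\omega_\lambda\circ v_\lambda)$, and the differential of the glued map is again a controlled linear combination $\sum_\lambda C^\alpha_{\lambda,\beta}\myd v_\lambda^\beta$, with $\sum_\lambda C^\alpha_{\lambda,\beta}v_\lambda^\beta$ itself a $(\Psi,k)$-splitting map to which Lemma~\ref{NonDegenaracyofSplittingMap} applies. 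If you want to repair your argument along the paper's lines, replace the Ricci flow step by the lift-and-extend-to-$(\delta,n)$-splitting argument plus \cite{CJN}, and replace the ad hoc partition of unity on $M$ by cut-offs built from the splitting maps so that the Jacobian closes up.
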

	
	A special case in Theorem \ref{Main} is when $k=n$, the volume condition can be removed since the volume convergence theorem (\cite{Co2}) implies $\vol(B_1(x))\ge v(n,i_0)$, and $f: M\to N$ is a diffeomorphism (\cite{CC2}, \cite{CJN}).

	\begin{remark} In \cite{Ro4}, it is proved that each fiber in Theorem \ref{Main} is diffeomorphic to an infra-nilmanifold; note that the induced metric on a fiber may not have a lower Ricci curvature bound, hence Theorem A in \cite{HKRX} does not apply. Conversely, a collapsed metric with Ricci curvature bounded below whose underlying `collapsing structure' is a (singular) nilpotent fibration, that is, extrinsic diameters of
		fibers are uniformly small while normal slices to fibers are not collapsed,
		necessarily satisfies certain local bounded Ricci covering geometry condition.
	\end{remark}
	
	\begin{remark} Theorem \ref{Main} does not hold if one removes the rewinding volume condition(\cite{An}). There have been generalizations of Theorem \ref{FukayaFibration} to manifolds with Ricci curvature bounded below (or in absolute value) under various additional conditions (\cite{DWY}, \cite{HKRX}, \cite{NZ}, \cite{PWY}, etc). We point it out that in any of the above work, indeed the volume condition in Theorem \ref{Main} is satisfied, and smoothing methods play an essential role, i.e.,
		finding a nearby metric with bounded sectional curvature then apply Theorem \ref{FukayaFibration}. In the contrast, our proof gives a direct construction for a bundle map; which does not use smoothing techniques, nor it relies on Theorem \ref{FukayaFibration}. A main reason is that because of the weak regularity, the present smoothing techniques do not seem to apply to our circumstance.
	\end{remark}
	
	\begin{remark}
		We conjecture that replacing $N$ in Theorem \ref{Main} by a compact length metric space $X$, a suitable singular fibration theorem similar to that in \cite{Fu2} should hold
		when strengthen the volume condition to the following condition: all points on $M$ are uniform $(r,\delta)$-local rewinding
		Reifenberg points, i.e., for all $x\in M$, $d_{GH}(B_s(\tilde x),B_s(0^n))\le s\delta(n)$ for all $0<s\le r$, where $\tilde x\in \widetilde{B_r(x)}$ and $B_s(0^n)\subset\R^n$ (\cite{HKRX}).
	\end{remark}

	To state the second main result in this paper, recall that the stability for compact Lie group $G$-actions
	in \cite{Pa} says that, if two $G$-actions on a compact manifold $M$ are $C^1$-close, then
	the two $G$-actions are conjugate via a small perturbation of the identity map. In \cite{GK},
	a geometric criterion for the $C^1$-closeness is described. For the sake of simplicity,
	we state the following often used version.
	
	\begin{theorem}\label{GKstability}
		Given $i_0>0$, there exists $\delta(i_0)>0$, such that the following holds.
		Let $M$ be a compact manifold with two metrics $g_0$ and $g_1$ such that $|\sec_{g_i}|\le1$ and $\inj_{g_0}\ge i_0$, and let $G$ be a compact Lie group
		with two effective isometric actions, $\iota_i:G\hookrightarrow\Isom(M,g_i)$, $i=0,1$, where $\Isom(M,g_i)$ denotes the isomtery group of  $(M,g_i)$. If the two actions are $\delta$-equivariant close, $\delta\le\delta(i_0)$, i.e., the identity map $\id_M:(M,g_1)\to(M,g_0)$ is a $\delta$-GHA and $d_{g_0}(\iota_0(g)x,\iota_1(g)x)\le\delta$ for all $x\in M,g\in G$, then the $\iota_0$-action and the $\iota_1$-action are conjugate by a diffeomorphism which is (small) isotopic to $\id_M$.
	\end{theorem}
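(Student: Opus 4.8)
The plan is to construct the conjugating diffeomorphism directly, by a Riemannian center-of-mass average in the spirit of Grove--Karcher (\cite{GK}, \cite{Pa}); the one preparatory point is to turn the Gromov--Hausdorff and orbit-closeness hypotheses into genuine $C^1$-closeness. For this, using $|\sec_{g_0}|,|\sec_{g_1}|\le 1$, $\inj_{g_0}\ge i_0$ and that $\id_M\colon(M,g_1)\to(M,g_0)$ is a $\delta$-GHA, the $C^{1,\alpha}$ convergence theory for metrics with two-sided sectional curvature bounds and an injectivity lower bound (harmonic-radius estimates, cf.\ \cite{CFG}) gives $\inj_{g_1}\ge i_0/2$ and $\|g_1-g_0\|_{C^{1,\alpha'}}\le\Psi(\delta|i_0)$ in a fixed atlas of harmonic charts. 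Combined with the rigidity of almost-isometries (a self-map of $M$ that is almost isometric for uniformly $C^1$-close metrics of bounded geometry and $C^0$-close to $\id_M$ is automatically $C^1$-close to $\id_M$), the hypothesis $d_{g_0}(\iota_0(g)x,\iota_1(g)x)\le\delta$ upgrades to: $\iota_0(g)$ and $\iota_1(g)$ are $\Psi(\delta|i_0)$-close in $C^1$, uniformly in $g\in G$.

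Next I would form the center-of-mass map. Let $dg$ be normalized Haar measure on $G$, and fix $\rho=\rho(i_0)>0$ below the convexity radius of $(M,g_0)$ (which is bounded below by a positive $c(i_0)$ since $|\sec_{g_0}|\le1$ and $\inj_{g_0}\ge i_0$); assume $\delta<\rho$. For $x\in M$ put
\[
F_x\colon G\to M,\qquad F_x(g)=\iota_0(g)\bigl(\iota_1(g^{-1})x\bigr),
\]
and note that with $y=\iota_1(g^{-1})x$ one has $d_{g_0}(F_x(g),x)=d_{g_0}(\iota_0(g)y,\iota_1(g)y)\le\delta$, so $F_x(G)\subset B_{g_0}(x,\delta)$. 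Then $p\mapsto\int_G d_{g_0}(F_x(g),p)^2\myd g$ is strictly convex on $B_{g_0}(x,\rho)$ with a unique minimizer there; define $h(x)$ to be this minimizer, i.e.\ the unique $p\in B_{g_0}(x,\rho)$ with $\int_G\exp_p^{-1}(F_x(g))\myd g=0$.

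It then remains to check three things. \emph{Smoothness}: $F_x(g)$ is smooth in $(x,g)$ because the actions are smooth, and the implicit function theorem applied to the defining equation --- whose linearization in $p$ is the positive-definite Hessian of the energy --- yields $h\in C^\infty(M,M)$. \emph{Equivariance}: for $a,g'\in G$ the homomorphism property gives $F_{\iota_1(a)x}(ag')=\iota_0(a)F_x(g')$, and since $\iota_0(a)$ is a $g_0$-isometry and $dg$ is left-invariant, the energy of $F_{\iota_1(a)x}$ at $p$ equals that of $F_x$ at $\iota_0(a^{-1})p$; hence $h(\iota_1(a)x)=\iota_0(a)h(x)$, so $h$ conjugates $\iota_1$ to $\iota_0$. \emph{Diffeomorphism isotopic to $\id_M$}: from $F_x(G)\subset B_{g_0}(x,\delta)$ we already have $d_{g_0}(h(x),x)\le\delta$, and Karcher's derivative estimate for the center of mass expresses $dh_x$, up to an error of order $\delta\cdot\sup|\sec_{g_0}|$, as the $dg$-average of the parallel transports of $d_xF_x(g)=d\iota_0(g)\circ d\iota_1(g^{-1})$, each $\Psi(\delta|i_0)$-close to the identity by the reduction above; thus $\|dh_x-\id\|\le\Psi(\delta|i_0)$. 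For $\delta$ small, $h$ is therefore a local diffeomorphism, and a degree argument on the compact $M$ (using that $h$ is $C^0$-close to $\id_M$, hence homotopic to it) makes it a global diffeomorphism; finally $H_t(x)=\exp^{g_0}_x\!\bigl(t(\exp^{g_0}_x)^{-1}h(x)\bigr)$, $t\in[0,1]$, is an isotopy through diffeomorphisms $\Psi(\delta)$-close to $\id_M$ from $\id_M$ to $h$.

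The step I expect to be the main obstacle is the passage to $C^1$-closeness together with the derivative estimate above: the center of mass is built entirely inside $(M,g_0)$, so to control $dh$ the construction must see the $\iota_1$-action through maps that are $C^1$-close --- not merely $C^0$-close --- to the $\iota_0$-maps, and that $C^1$-closeness is exactly what the $\delta$-GHA hypothesis does not give by itself but must be recovered from the two-sided curvature bounds and the injectivity lower bound. Once the data is $C^1$-controlled, the remaining estimates are the classical Jacobi-field comparison computations underlying Karcher's averaging, and the argument is in substance that of \cite{GK}.
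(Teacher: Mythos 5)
The paper does not prove Theorem \ref{GKstability}: it is recalled as a known background result, attributed to \cite{Pa} for the $C^1$-conjugation and to \cite{GK} for the geometric criterion that produces the required $C^1$-closeness, and the paper's own contribution is the generalization Theorem \ref{GroupStatbility}, proved by an entirely different route (gluing splitting maps and then averaging, precisely because the weak regularity $\Ric_{g_1}\ge -(n-1)$ blocks the $C^{1,\alpha}$-upgrade you rely on). So there is no in-paper proof to compare against.

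That said, your sketch is in substance the Grove--Karcher argument and is structurally sound: the crucial step, which you correctly single out, is the passage from the Gromov--Hausdorff and orbit-closeness hypotheses to uniform $C^1$-closeness of $\iota_0(g)$ and $\iota_1(g)$; once that is in hand, the center-of-mass average $F_x(g)=\iota_0(g)\iota_1(g^{-1})x$ and Karcher's derivative estimate close the argument exactly as you describe, and your verification of the equivariance identity is correct. Two points deserve a more careful statement than your sketch gives. First, $\inj_{g_1}\ge c(i_0)>0$ does not come from $C^{1,\alpha}$-closeness of the metrics (that closeness is what you are trying to establish); it comes from volume convergence under GH-closeness plus Cheeger's injectivity-radius estimate, and is a prerequisite for having uniform-size harmonic charts for $g_1$. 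Second, $C^{1,\alpha}$-compactness yields convergence of the metrics only modulo diffeomorphisms, so one must additionally argue that those gauge diffeomorphisms can be chosen $C^1$-close to $\id_M$ (using that $\id_M$ is a $\delta$-GHA and both metrics have uniformly bounded geometry) before concluding $\|g_1-g_0\|_{C^{1,\alpha'}}\le\Psi(\delta|i_0)$ in a fixed atlas. These are standard but nontrivial; with them supplied, your reduction to the classical $C^1$-close case is complete and the rest is the computation in \cite{GK}.
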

	
	We generalize Theorem \ref{GKstability} to the following:
	
	\begin{theorem}\label{GroupStatbility}
		Given $n, i_0>0$, there exists $\delta(n,i_0)>0$, such that the following holds.
		Let $M$ be a compact $n$-manifold with two metrics $g_0$ and $g_1$ such that $|\sec_{g_0}|\le1$, $\inj_{g_0}\ge i_0$, $\Ric_{g_1}\ge -(n-1)$ and let $G$ be a compact Lie group effectively acting on $(M,g_0)$ and $(M,g_1)$ by isometries respectively. If the two actions are $\delta$-equivariant close, $\delta\le\delta(n,i_0)$, then the two actions are conjugate by a diffeomorphism.	
	\end{theorem}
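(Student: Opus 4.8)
The plan is to adapt the Grove--Karcher center-of-mass argument. The only genuine difficulty is that $g_1$ has merely a lower Ricci bound, so a priori neither its local geometry nor its isometries are controlled; the decisive point will be that, because $(M,g_1)$ is $\delta$-Gromov--Hausdorff close \emph{via the identity} to the fixed, non-collapsed, bounded-geometry manifold $(M,g_0)$, Cheeger--Colding $\epsilon$-regularity forces $(M,g_1)$ to have bounded geometry as well, after which the argument is essentially classical.

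First I would upgrade the regularity of $g_1$. Fix a small scale $r=r(i_0)$ at which $(M,g_0)$ is almost Euclidean: by the Rauch comparison theorem every $r$-ball of $(M,g_0)$ is $\Psi(r)$-close to $B_r(0^n)\subset\R^n$ and almost Euclidean in volume. Since $|d_{g_1}-d_{g_0}|\le\delta$, rescaling by $r^{-1}$ makes the unit balls of $r^{-2}g_1$ Gromov--Hausdorff close to $B_1(0^n)$ while $\Ric_{r^{-2}g_1}\ge-(n-1)r^{2}$ is almost nonnegative; Colding's volume continuity then gives the non-collapsing bound $\vol_{g_1}(B_s(x))\ge c(n)s^{n}$ for $s\le r/2$, and the Cheeger--Colding/Anderson $\epsilon$-regularity theorem gives a lower bound $r_h(n,i_0)>0$ for the $C^{1,\alpha}$ harmonic radius of $(M,g_1)$ at every point, provided $\delta\le\delta(n,i_0)$. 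With both $g_0$ and $g_1$ now of bounded geometry and $\delta$-close via the identity, a standard patching of harmonic coordinate charts produces a diffeomorphism $\psi\colon M\to M$ with $d_{g_0}(\psi x,x)\le\Psi(\delta\,|\,n,i_0)$ and $\psi^{*}g_0$ $\Psi(\delta\,|\,n,i_0)$-close to $g_1$ in $C^{1,\alpha}$ in harmonic charts. Replacing $\iota_1$ by $\iota_1':=\psi\,\iota_1(\cdot)\,\psi^{-1}$, which is isometric for $g_1':=(\psi^{-1})^{*}g_1$, I am reduced to the case where $g_1'$ is $\Psi(\delta)$-close to $g_0$ \emph{as a tensor} and $\iota_0,\iota_1'$ are $\Psi(\delta)$-equivariant close (a conjugacy of $\iota_0$ and $\iota_1'$ gives one of $\iota_0$ and $\iota_1$ upon composing with $\psi$).

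Next I would run the equivariant center-of-mass construction with respect to $g_0$. Since the $g_0$-convexity radius is $\ge c(n,i_0)\gg\Psi(\delta)$ and all the points $\iota_0(h)\iota_1'(h^{-1})x$, $h\in G$, lie in a $\Psi(\delta)$-ball about $x$, the $g_0$-center of mass $\phi(x):=\mathrm{cm}_{g_0}\{\iota_0(h)\iota_1'(h^{-1})x:h\in G\}$ over the normalized Haar measure of $G$ is well defined and smooth in $x$; the substitution $h\mapsto gh$, together with the fact that the $g_0$-isometry $\iota_0(g)$ commutes with $\mathrm{cm}_{g_0}$, yields the \emph{exact} equivariance $\phi\circ\iota_1'(g)=\iota_0(g)\circ\phi$. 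To see that $\phi$ is a diffeomorphism, I would differentiate the defining identity $\int_G\exp_{\phi(x)}^{-1}\!\big(\iota_0(h)\iota_1'(h^{-1})x\big)\,dh=0$; this shows $d\phi_x$ is $\Psi(\delta)$-close to $\int_G d\big(\iota_0(h)\iota_1'(h^{-1})\big)_x\,dh$, so it suffices that each $F_h:=\iota_0(h)\iota_1'(h^{-1})$ be $\Psi(\delta)$-$C^{1}$-close to $\id_M$. Now $F_h$ is $\Psi(\delta)$-$C^{0}$-close to $\id_M$ by the equivariant closeness, and $F_h^{*}g_0=\iota_1'(h^{-1})^{*}g_0$ is $\Psi(\delta)$-close to $\iota_1'(h^{-1})^{*}g_1'=g_1'$, hence to $g_0$; so $F_h$ is a $\Psi(\delta)$-almost isometry of $g_0$ that is $C^{0}$-close to the identity, and the quantitative stability of almost isometries on a manifold of bounded geometry --- such a map is automatically $C^{1}$-close to $\id_M$ with modulus depending only on $n,i_0$ --- gives $\|dF_h-\id\|\le\Psi(\delta\,|\,n,i_0)$. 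Hence $d\phi_x$ is invertible once $\delta\le\delta(n,i_0)$, $\phi$ is a diffeomorphism, and $\phi\circ\psi^{-1}$ conjugates the $\iota_1$-action to the $\iota_0$-action.

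The hard part will be the first step: extracting uniform $C^{1,\alpha}$ regularity --- hence genuine bounded geometry --- for $g_1$ from only $\Ric_{g_1}\ge-(n-1)$ and GH-closeness to $(M,g_0)$, where the full strength of Cheeger--Colding theory (volume continuity together with $\epsilon$-regularity) is indispensable, and then passing from the purely metric $\delta$-closeness of $g_0,g_1$ to the tensorial closeness that the center-of-mass estimates need, which must be carried out locally at the harmonic scale so that no dependence on the (a priori unbounded) diameter of $M$ enters. The quantitative stability of almost isometries used in the second step, and the verification that the center of mass is equivariant and non-degenerate, are then routine, in the spirit of Grove--Karcher.
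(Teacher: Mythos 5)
Your first step — ``Cheeger--Colding $\epsilon$-regularity forces $(M,g_1)$ to have bounded geometry,'' i.e.\ a uniform lower bound on the $C^{1,\alpha}$ harmonic radius of $g_1$ — is not available under the hypotheses, and the rest of the proposal collapses with it. Anderson's $\epsilon$-regularity theorem, which yields a $C^{1,\alpha}$ harmonic radius bound from non-collapse plus GH-closeness to $\R^n$, requires a \emph{two-sided} Ricci bound $|\Ric|\le\Lambda$; with only $\Ric_{g_1}\ge -(n-1)$ (and non-collapse, which you correctly get from Colding's volume continuity), Cheeger--Colding/Cheeger--Jiang--Naber theory gives only \emph{bi-H\"older} (Reifenberg-type) charts, not $C^{1,\alpha}$ ones. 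The paper flags exactly this point: the hypotheses ``only guarantee a bi-H\"older close for distances $d_{g_i}$,'' whereas the classical Grove--Karcher statement (Theorem~\ref{GKstability}) uses a $C^{1,\alpha}$-closeness of the tensors. Consequently there is no diffeomorphism $\psi$ with $\psi^*g_0$ $C^{1,\alpha}$-close to $g_1$ to reduce to, and the maps $F_h=\iota_0(h)\iota_1'(h^{-1})$ are not controlled in $C^1$: they are $g_0$-almost-isometries only in a metric (bi-H\"older) sense, which does not give $\|dF_h-\id\|$ small. The ``quantitative stability of almost isometries'' you invoke in the second step is again a bounded-geometry fact and cannot be applied once the first step fails.

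The paper avoids this obstruction entirely, and its route is instructive to contrast with yours. Instead of trying to upgrade the metric regularity of $g_1$, it replaces $\id_M$ by the explicit fiber-bundle map $f:(M,g_1)\to(M,g_0)$ built from $(\delta,n)$-splitting maps (Theorem~\ref{Main} with $N=(M,g_0)$). The crucial extra regularity is not in $g_1$ but in $f$: its differential is expressed, via Lemma~\ref{Jacobian0}, as a finite sum of differentials of splitting maps with controlled coefficients, and the Canonical Reifenberg theorem of \cite{CJN} gives non-degeneracy of splitting maps under only a Ricci lower bound and local non-collapse. One then averages $f_g=\iota_0(g^{-1})\circ f\circ\iota_1(g)$ over $G$ by the center-of-mass construction on $(M,g_0)$; the key computation shows that $dF(p)$ equals $dv(p)$ for a new $(\Psi,n)$-splitting map $v$ obtained by integrating the $v_{\lambda,g}$'s, and Lemma~\ref{NonDegenaracyofSplittingMap} then gives non-degeneracy. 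In short, the paper trades the ``smooth $g_1$'' picture for a ``smooth harmonic approximation'' picture, which is the only regularity that survives with a one-sided Ricci bound. If you want to repair your proposal, the essential missing idea is precisely this: work with harmonic ($(\delta,k)$-splitting) approximations of $\id_M$ and verify non-degeneracy through the Canonical Reifenberg theorem, rather than attempting to bootstrap $g_1$ into bounded geometry.
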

	
	We believe that in Theorem \ref{GroupStatbility} the diffeomorphism should be small isotropic to $\id_M$. We point it out that the regularity condition in Theorem \ref{GroupStatbility} is weak; the conditions only guarantee a bi-H\"older close for distances $d_{g_i}$, while the conditions in Theorem \ref{GKstability} imply the $C^{1,\alpha}$-close for tensors $g_i$.
	
	As applications of Theorems \ref{Main} and \ref{GroupStatbility}, we give an equivariant fibration theorem, and a quantitative maximal volume rigidity result.
	
	\begin{corollary}\label{EquivFibration}
		Given $n,v>0$, a compact $k$-manifold $N$ and a closed subgroup $H\subset\Isom(N)$, $k\le n$, there exists $\delta_0$ depending on $n,v,N,H$, satisfying the follows. If $M$ is a compact $n$-manifold and $G$ is a closed subgroup of $\Isom(M)$, satisfying for any $x\in M$, $$\Ric_{M}\ge-(n-1),\quad\widetilde{\vol}(B_1(x))\ge v,\quad d_{GH}((M,G),(N,H))\le\delta\le\delta_0,$$ where $d_{GH}$ denotes the equivariant Gromov-Hausdorff distance, then there exits a smooth fiber bundle map $f:M\to N$ and a Lie group homomorphism $\varphi:G\to H$ such that for any $g\in G$, $f\circ g=\varphi(g)\circ f$.
	\end{corollary}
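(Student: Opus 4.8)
The plan is to combine Theorem \ref{Main}, which supplies a non-equivariant fibration, with a construction of the homomorphism $\varphi$ and a Haar-averaging (``center of mass'') that turns the fibration into a genuinely equivariant one. Since $N$ and $H\subset\Isom(N)$ are fixed, I let every constant depend on them; after rescaling I may assume $|\sec_N|\le1$ and $\inj_N\ge i_0=i_0(N)>0$, so Theorem \ref{Main} applies. Forgetting the group actions gives $d_{GH}(M,N)\le\delta$, so Theorem \ref{Main} produces a smooth fiber bundle map $f_1\colon M\to N$ that is a $\Psi(\delta\,|\,n,v,N)$-GHA; it is onto, and from its construction I also retain the bound $|\nabla f_1|\le C(n)$ and the quantitative non-degeneracy of $df_1$ transverse to the $f_1$-fibres. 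Below $\Psi$ denotes positive quantities depending on $\delta,n,v,N,H$ and tending to $0$ as $\delta\to0$.

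First I would build $\varphi$. Equivariant $\delta$-closeness gives a GHA $h\colon M\to N$ and a map $\psi\colon G\to H$ with $d_N(h(gx),\psi(g)h(x))\le\Psi(\delta)$. Since any $\Psi(\delta)$-GHA of the fixed manifold $N$ is $\Psi(\delta)$-close to an element of the compact group $\Isom(N)$, after composing $f_1$ with a suitable isometry of $N$ (still a fiber bundle map) I may assume $f_1$ is $\Psi(\delta)$-close to $h$, so that $d_N(f_1(gx),\psi(g)f_1(x))\le\Psi(\delta)$ for all $g\in G$, $x\in M$. Effectiveness of the fixed $H$-action yields $c_0=c_0(N,H)>0$ with $\sup_{q\in N}d_N(aq,bq)\ge c_0\,d_H(a,b)$; since $f_1$ is onto and $f_1(g_1g_2x)\approx\psi(g_1)\psi(g_2)f_1(x)\approx\psi(g_1g_2)f_1(x)$, this forces $d_H(\psi(g_1g_2),\psi(g_1)\psi(g_2))\le\Psi(\delta)$, i.e. $\psi$ is an almost homomorphism. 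The stability of almost homomorphisms between compact Lie groups (the center-of-mass averaging of Grove--Karcher, \cite{GK}) then gives a homomorphism $\varphi\colon G\to H$ with $d_H(\varphi(g),\psi(g))\le\Psi(\delta)$, hence
$$d_N\big(f_1(gx),\varphi(g)f_1(x)\big)\le\Psi(\delta),\qquad g\in G,\ x\in M;$$
feeding in the quasi-inverse $\chi\colon H\to G$ of the equivariant closeness shows in the same way that $\varphi(G)$ is $\Psi(\delta)$-dense, hence (being a closed subgroup of the fixed compact Lie group $H$) equals $H$ once $\delta$ is small.

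Next I would equivariantize. Define $f\colon M\to N$ so that $f(x)$ is the Riemannian center of mass of $\{\varphi(g)^{-1}f_1(gx)\}_{g\in G}$ with respect to normalized Haar measure on $G$. By the estimate just displayed this family lies in a ball of radius $\Psi(\delta)\ll\inj_N$ inside the convexity radius of $N$, so $f$ is well defined, smooth, and satisfies $d_N(f(x),f_1(x))\le\Psi(\delta)$ (so $f$ is still a $\Psi(\delta)$-GHA); the substitution $g\mapsto gg_0$ in the Haar integral, together with the equivariance of the center of mass under isometries of $N$, gives $f(g_0x)=\varphi(g_0)f(x)$ for all $g_0\in G$. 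It then remains to check that $df_x$ is onto for every $x$; since $f$ is proper, Ehresmann's theorem upgrades this to the fiber bundle conclusion and the proof is complete. Surjectivity of $df_x$ along the orbit $Hf(x)$ is forced by equivariance and $\varphi(G)=H$; transverse to the orbit I would compare $f$ with $f_1$, using that each $x\mapsto\varphi(g)^{-1}f_1(gx)$ has derivative $\le C(n)$ and is non-degenerate transverse to fibres (the data kept from Theorem \ref{Main}, the actions being isometric) and that the Haar average retains this non-degeneracy; at this step Theorem \ref{GroupStatbility}, applied on the $f_1$-saturated coordinate neighbourhoods of $N$, can be invoked to replace the $G$-action by a conjugate one for which $f$ becomes $C^1$-close to $f_1$, making the conclusion transparent.

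The hard part will be exactly this last step. In contrast with the bounded sectional curvature setting there is no a priori $C^1$-comparison between $f_1$ near $x$ and near $gx$ --- the $G$-orbits in $M$ can be large even when they are small in $N$ --- so the submersivity of the averaged map is not visible from the bare GHA property of $f_1$ and must draw on the finer regularity of the fibration produced inside the proof of Theorem \ref{Main}, together with the stability Theorem \ref{GroupStatbility}. A secondary technical point is upgrading the (only measurable) almost homomorphism $\psi$ to the genuine homomorphism $\varphi$.
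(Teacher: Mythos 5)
Your proposal correctly lays out the skeleton (obtain the homomorphism $\varphi\colon G\to H$, Haar-average $\varphi(g)^{-1}\circ f\circ g$, then check non-degeneracy of the averaged map), and you are right to single out the non-degeneracy of the averaged map as the crux in the Ricci setting. But the proposal does not actually close that step, and the specific patch you suggest for it would not work. You write that Theorem \ref{GroupStatbility} can be ``applied on the $f_1$-saturated coordinate neighbourhoods of $N$ to replace the $G$-action by a conjugate one for which $f$ becomes $C^1$-close to $f_1$.'' Theorem \ref{GroupStatbility} conjugates two nearby isometric $G$-actions on a fixed manifold $M$; it is not a local statement, and conjugating the $G$-action does not, by itself, produce a $C^1$ comparison between $f$ and $f_1$ or a lower bound on $|dF|$ transverse to the fibres. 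Nor is it true without argument that the Haar average ``retains non-degeneracy'': averaging a family of submersions need not give a submersion when one only controls the individual maps up to GHA accuracy. This is precisely the gap you flagged as the ``hard part,'' and the proposal leaves it open.

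The mechanism the paper actually uses never appears in your proposal: a splitting-map Jacobian identity. As in the proofs of Lemma \ref{Jacobian} and Lemma \ref{Jacobian0}, one applies the implicit function theorem to the energy
$\bar E(x,y)=\tfrac12\int_G d_{g_0}^2\bigl(\varphi(g)^{-1}f(\iota(g)x),y\bigr)\,\myd g$
and expresses $dF^\alpha(p)$ as $\sum_\lambda\int_G C^\alpha_{\lambda,\beta}(g)\,dv^\beta_{\lambda,g}(p)\,\myd g$, where each $v_{\lambda,g}$ is a $(\Psi(\delta),k)$-splitting map (the $g$-rotated version of the $v_\lambda$ from Lemma \ref{Jacobian}) and the constants satisfy $|C^\alpha_{\lambda,\beta}(g)-\delta^\alpha_\beta\phi_{\lambda,g}(p)D_g(p)^{-1}|\le\Psi(\delta)$. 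One then shows that $v(x)\triangleq\sum_\lambda\int_G C^\alpha_{\lambda,\beta}(g)v^\beta_{\lambda,g}(x)\,\myd g$ is itself a $(\Psi(\delta),k)$-splitting map near $p$ with $dv(p)=dF(p)$, so Lemma \ref{NonDegenaracyofSplittingMap} gives non-degeneracy. That is the missing idea; without it the proof is incomplete.

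A secondary remark on $\varphi$: the paper simply cites Theorem \ref{GroupHom} (\cite{MRW}), which under equivariant GH-convergence of compact manifolds directly produces a Lie group homomorphism $\varphi\colon G\to H$ that is an equivariant GHA. Your route via ``almost homomorphisms'' and Grove--Karcher averaging is conceptually related, but you would still have to upgrade the map $\psi$ coming from an equivariant GHA (which is not assumed continuous, let alone measurable) to a continuous almost-homomorphism before any averaging makes sense; this is extra work that \cite{MRW} has already done. Also, your side claim that $\varphi(G)=H$ is not asserted in the corollary and is not needed.
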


	\begin{corollary}\label{SphereRigidity}
		For $n>0$, there exists $\delta_0(n)>0$ such that if $M$ is a compact $n$-manifold with $\Ric_{M}\ge (n-1)$ and $\vol(\tilde M)\ge(1-\delta)\vol(S_1^n)$ for $\delta\in(0,\delta_0(n))$, where $S_1^n$ denotes the standard unit $n$-sphere, then $M$ is diffeomorphic to a spherical space form by a $\Psi(\delta|n)$-GHA.	
	\end{corollary}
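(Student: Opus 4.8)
The plan is to argue by contradiction: pass to the universal covers, extract an equivariant Gromov--Hausdorff limit, and invoke Corollary \ref{EquivFibration}. We may assume $n\ge 2$, the case $n=1$ being immediate (the only compact $1$-manifold is $S_1^1$, a spherical space form). Suppose the conclusion fails for some $n\ge 2$: there are $\varepsilon_0>0$ and compact $n$-manifolds $M_i$ with $\Ric_{M_i}\ge n-1$ and $\vol(\tilde M_i)\ge(1-\delta_i)\vol(S_1^n)$, $\delta_i\to 0$, such that no diffeomorphism from $M_i$ onto a spherical space form is an $\varepsilon_0$-GHA. By Bonnet--Myers, $\Gamma_i:=\pi_1(M_i)$ is finite, so the universal Riemannian cover $\pi_i:\tilde M_i\to M_i$ has compact total space; $\tilde M_i$ is a compact simply connected $n$-manifold with $\Ric_{\tilde M_i}\ge n-1$ and $\diam(\tilde M_i)\le\pi$, on which $\Gamma_i$ acts freely by deck isometries. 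By Bishop--Gromov, $\vol(\tilde M_i)\le\vol(S_1^n)$, so the volume of $\tilde M_i$ is almost maximal, whence by Colding's volume rigidity (\cite{Co2}, \cite{CC1}) one has $d_{GH}(\tilde M_i,S_1^n)\le\Psi(\delta_i|n)\to 0$.

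Next I would verify the hypotheses of Corollary \ref{EquivFibration} for $(\tilde M_i,\Gamma_i)$. Since $\widetilde{\vol}(B_1(\tilde x))\ge\vol(B_1(\tilde x))$ and $\Ric_{\tilde M_i}\ge-(n-1)$, the volume convergence theorem (\cite{Co2}) applied to $\tilde M_i\to S_1^n$ gives $\vol(B_1(\tilde x))\to\vol(B_1(p))=:v(n)>0$ uniformly in $\tilde x$ (note $1<\pi=\diam S_1^n$), so $\widetilde{\vol}(B_1(\tilde x))\ge v:=v(n)/2$ for all $\tilde x\in\tilde M_i$ once $i$ is large. The $\Gamma_i$ are finite, hence $0$-dimensional compact Lie groups acting by isometries, so by equivariant Gromov--Hausdorff precompactness, after passing to a subsequence $(\tilde M_i,\Gamma_i)\to(S_1^n,H)$ equivariantly, with $H$ a closed subgroup of $\Isom(S_1^n)=O(n+1)$. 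Now $n$, $v$, $N:=S_1^n$ and $H$ are fixed, so Corollary \ref{EquivFibration} provides $\delta_0=\delta_0(n,v,S_1^n,H)>0$; for $i$ large, both $\delta_i<\delta_0$ and the equivariant Gromov--Hausdorff distance $d_{GH}((\tilde M_i,\Gamma_i),(S_1^n,H))<\delta_0$, so we obtain a smooth fiber bundle map $f_i:\tilde M_i\to S_1^n$ and a Lie group homomorphism $\varphi_i:\Gamma_i\to H$ with $f_i\circ\gamma=\varphi_i(\gamma)\circ f_i$ for all $\gamma\in\Gamma_i$. Since $\dim\tilde M_i=\dim S_1^n=n$, the fibers of $f_i$ are $0$-dimensional, so $f_i$ is a finite covering; as $S_1^n$ is simply connected, $f_i$ is a diffeomorphism, and (as in Theorem \ref{Main}) a $\Psi(\delta_i|n)$-GHA.

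Finally, $\varphi_i$ is injective, being the conjugation $\gamma\mapsto f_i\gamma f_i^{-1}$, and $\varphi_i(\Gamma_i)$ is a finite subgroup of $O(n+1)$ acting freely on $S_1^n$ (conjugate via $f_i$ to the free deck action). Hence $S_1^n/\varphi_i(\Gamma_i)$ is a spherical space form, $f_i$ descends to a diffeomorphism $\bar f_i:M_i=\tilde M_i/\Gamma_i\to S_1^n/\varphi_i(\Gamma_i)$, and from $d_{S_1^n}(f_i(x),f_i(\gamma y))=d_{\tilde M_i}(x,\gamma y)\pm\Psi(\delta_i|n)$ one gets, after taking minima over $\gamma$, that $\bar f_i$ is a $\Psi(\delta_i|n)$-GHA. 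For $i$ large this contradicts the choice of $M_i$ and $\varepsilon_0$, and the corollary follows.

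The genuinely delicate point here (beyond the cited volume rigidity of Colding and the equivariant Gromov--Hausdorff machinery feeding into Corollary \ref{EquivFibration}) is to guarantee that $\varphi_i$ is injective with image acting \emph{freely} on $S_1^n$, so that the quotient is a bona fide spherical space form rather than a degenerate one. This is exactly why I route the construction through the fibration map $f_i$, which is a diffeomorphism so that injectivity and freeness come for free, instead of trying to directly build a homomorphism $\Gamma_i\to O(n+1)$ close to the deck action: the latter is obstructed because a nontrivial free isometry of finite order can move every point of $\tilde M_i$ by an arbitrarily small amount, so its ``nearby'' element of $O(n+1)$ might be the identity and effectiveness would be lost.
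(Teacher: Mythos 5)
Your proposal is correct and follows essentially the same strategy as the paper: pass to universal covers, extract an equivariant Gromov--Hausdorff limit, identify the limit of $\tilde M_i$ with $S_1^n$ via Colding's volume rigidity, produce an equivariant diffeomorphism $\tilde M_i\to S_1^n$, and descend to the quotients. The only difference is one of packaging. The paper invokes Theorem~\ref{Main} (to get a diffeomorphism $f_i$), Theorem~\ref{GroupHom} (to get an injective $\varphi_i$ with $(f_i,\varphi_i)$ an equivariant GHA), and then Theorem~\ref{GroupStatbility} to conjugate the pushed-forward $\Gamma_i$-action to $\varphi_i(\Gamma_i)$; you instead feed directly into Corollary~\ref{EquivFibration} (whose proof packages exactly those ingredients plus the averaging), obtain the equivariance $f_i\circ\gamma=\varphi_i(\gamma)\circ f_i$ at once, and observe that since $k=n$ the bundle map $f_i$ is a covering of the simply connected $S_1^n$ and hence a diffeomorphism, so that injectivity of $\varphi_i$ and freeness of $\varphi_i(\Gamma_i)$ drop out as conjugation by a diffeomorphism rather than from the ``moreover'' clause of Theorem~\ref{GroupHom}. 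Both routes need and implicitly use the same fact that freeness is preserved under conjugation by a diffeomorphism, and your derivation of the GHA property for $\bar f_i$ by minimizing over $\Gamma_i$ is the standard argument that the paper leaves tacit. One small caveat worth noting: the conclusion of Corollary~\ref{EquivFibration} as stated does not say that the equivariant bundle map is a $\Psi(\delta|n)$-GHA; you are right that it is (the averaged map in its proof stays $\Psi$-close to the original GHA), but strictly speaking you are leaning on the proof of that corollary rather than its statement.
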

	
	\begin{remark} Corollary \ref{SphereRigidity} partially recovers Theorem A in \cite{CRX1}; the volume condition of Corollary \ref{SphereRigidity} was proved in \cite{CRX1} by assuming local rewinding volume almost maximal and universal cover non-collapsed. To conclude Corollary \ref{SphereRigidity}, \cite{CRX1} used smoothing via Ricci flows, and then applied Theorem \ref{GKstability}. Here we can apply Theorem \ref{GroupStatbility} to have a direct proof through the weak regularity.
	\end{remark}
	
	We now briefly describe our approach to Theorem \ref{Main} and \ref{GroupStatbility}. In Theorem \ref{Main}, starting with a $\delta$-GHA, $h:M\to N$, locally we will employ $(\delta,k)$-splitting maps to approximate $h$ (\cite {CC1}, \cite{CJN}),
	and glue together these local $(\delta,k)$-splitting maps via the technique
	of center of mass, to form a smooth $\Psi(\delta|n)$-GHA, $f: M\to N$.

	To check that $\myd f$ is non-degenerate at every point (and thus $f$ is a fiber bundle map), the main difficulty is a lack of metric regularity of $M$. The verification is in two steps.
	
	The first step is to show each $(\delta,k)$-splitting map is non-degenerate (see Lemma \ref{NonDegenaracyofSplittingMap}). In the non-collapsed situation, i.e., $n=k$, the non-degeneracy is guaranteed by Canonical Reifenberg Theorem (\cite{CJN}, see Theorem \ref{CanonicalReifenbergThm} below). When $k<n$, a $(\delta,k)$-splitting map may be not a bundle map in general (\cite{An}).
	In our circumstance, the volume condition is indeed equivalent to that $M$ is uniformly local rewinding Reifenberg (see Lemma \ref{RegularPoints}), hence the lift of any $(\delta,k)$-splitting map to a local Riemannian universal cover extends to a $(\delta,n)$-splitting map. Consequently the $(\delta,k)$-splitting map is non-degenerate.

	The second step relies on a flexibility of $(\delta,k)$-splitting maps (a harmonic map which is $C^0$-close to a $(\delta,k)$-splitting map is also a $(\Psi(\delta|n),k)$-splitting map). To accentuate our idea, we illustrate it in the simple case, $N=\R^1$ and $f=\sum_\lambda \phi_\lambda v_\lambda$, where the sum has at most $\Lambda(n)$-many nonzero terms and $\phi_\lambda=(\phi\circ v_\lambda)(\sum_{\lambda'} \phi\circ v_{\lambda'})^{-1}$ is a partition of unity for a fixed cut-off function $\phi$. For $p\in M$, a direct computation shows $\myd f(p)=\sum_\lambda C_{\lambda,p}\myd v_\lambda(p)$, where $C_{\lambda,p}$ are constants depending on $p$ with $|C_{\lambda,p}|\le C(n)$ and $|\sum_\lambda C_{\lambda,p}-1|\le\Psi(\delta|n)$ (see Lemma \ref{Jacobian}). Note that each $v_\lambda$ approximates $h$, hence by the flexibility of $(\delta,k)$-splitting maps, $v(x)\triangleq\sum_\lambda C_{\lambda,p}v_\lambda(x)$ is a $(\Psi(\delta|n),k)$-splitting map around $p$ with $\myd f(p)=\myd v(p)$. By now the non-degeneracy
	of $\myd f$ at $p$ follows from the first step.

	In Theorem \ref{GroupStatbility}, to obtain a $G$-equivariant map $F:M\to M$, a standard procedure is to average $\id_M$ over the two compact isometric $G$-action via the center of mass technique. In Theorem \ref{GKstability}, the $C^{1,\alpha}$-regularity of $\id_M:(M,g_1)\to(M,g_0)$ guarantees $F$ is a diffeomorphism. In our weak regularity situation, we replace $\id_M$ by a diffeomorphism $f$ constructed in Theorem \ref{Main} for $N=(M,g_0)$ which approximates $\id_M$. With extra regularity on $f$ (Lemma \ref{Jacobian0}), by techniques similar to that used in the proof of Theorem \ref{Main} we show the non-degeneracy of $F$.

	We will organize the rest of the paper as follows:
	
	In Section 1, we review notions and properties that will be used through the rest of the paper.
	
	In Section 2, we will prove that any $(\delta,k)$-splitting map, under the conditions of Theorem \ref{Main}, is non-degenerate, see Lemma \ref{NonDegenaracyofSplittingMap}.
	
	In Section 3, we will complete the proof of Theorem \ref{Main} by showing that the gluing map is non-degenerate, see Lemma \ref{Jacobian}.
	
	In Section 4, we will prove Theorem \ref{GroupStatbility} and Corollary \ref{EquivFibration}, \ref{SphereRigidity}.

	\begin{acknowledgements}
		I would like to thank  my thesis advisor, Professor Xiaochun Rong, for suggesting to me the topic in
		this paper, and for his inspirational suggestions and support throughout the writing of 
		the paper.
	\end{acknowledgements}
	
	\noindent\\[4mm]
	\section{Preliminary}
	In this section we recall some notions and results that will be used through this paper.
	
	\subsection{Equivariant Gromov-Hausdorff Convergence}
	
	The references of this part are \cite{FY}, \cite{Ro1}.
	
	Given compact metric spaces $X,Y$, $d_{GH}(X,Y)\le\delta$, means that there is a $\delta$-GHA $h:X\to Y$, i.e., for any $x_1,x_2\in X$, $|d(x_1,x_2)-d(h(x_1),h(x_2))|\le\delta$ and for any $y\in Y$, there is $x\in X$ such that $d(h(x),y)\le\delta$. Suppose $\Gamma,G$ are closed subgroups of $\Isom(X)$ and $\Isom(Y)$ respectively. The equivariant Gromov-Hausdorff distance $d_{GH}((X,\Gamma),(Y,G))\le\delta$ means that there is a triple of maps $h: X\to Y$, $\varphi:\Gamma\to G$ and $\psi:G\to\Gamma$, for all $x\in X$, $\gamma\in\Gamma$, $g\in G$, such that,

	\begin{equation}\label{almostcommutes}
	h\text{ is a } \delta\text{-GHA},\quad
	d(\varphi(\gamma)h(x),h(\gamma x))\le\delta,\quad
	d(g h(x),h(\psi(g)x))\le\delta.
	\end{equation}
	
	The triple, $(h,\varphi,\psi)$, satisfying conditions (\ref{almostcommutes}) is called a $\delta$-equivariant GHA. We say that $(X_i,G_i)$ converges to $(X,G)$ in equivariant Gromov-Hausdorff sense, denoted by $(X_i,G_i)\myarrow{GH}(X,G)$, if $d_{GH}((X_i,G_i),(X,G))\le\delta_i\to0$.
	
	When $(X,p),(Y,q)$ are pointed complete metric spaces with closed subgroups $\Gamma\subset\Isom(X)$ and $G\subset\Isom(Y)$, then the above notions of equivariant convergence naturally extend to a pointed version. A triple of maps $(h,\varphi,\psi)$ is called a $\delta$-equivariant GHA from $(X,p,\Gamma)$ to $(Y,q,G)$ if $h:
	B_{\delta^{-1}}(p)\to B_{\delta^{-1}+\delta}(q)$, $h(p)=q$,
	$\varphi: \Gamma(\delta^{-1})\to G(\delta^{-1}+\delta)$,
	$\varphi(e)=e$, $\psi:G(\delta^{-1})\to \Gamma(\delta^{-1}
	+\delta)$, $\psi(e)=e$, and conditions (\ref{almostcommutes}) holds whenever $\gamma,g$ stay in the domain of $\varphi,\psi$ respectively and $x, \gamma x, \psi(g)x$ stay in the domain of $h$, where $\Gamma(R)\triangleq\{\gamma\in\Gamma|d(p,\gamma(p))\le R\}$. And $(X_i,p_i,\Gamma_i)\myarrow{GH}(X,p,\Gamma)$ means that there exists a sequence of $\delta_i$-equivariant GHA from $(X_i,p_i,\Gamma_i)$ to $(X,p,\Gamma)$.
	
	Some basic properties we need are listed below.
	\begin{lemma}\label{eqconexist}
		If $(X_i,p_i)\myarrow{GH}(X,p)$, and $\Gamma_i$ are closed subgroups of $\Isom(X_i)$, then passing to a subsequence, there is a closed subgroup $\Gamma\subset\Isom(X)$ such that, $$(X_i,p_i,\Gamma_i)\myarrow{GH}(X,p,\Gamma).$$
	\end{lemma}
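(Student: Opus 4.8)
The plan is to realize $\Gamma$ as a subsequential limit, inside $\Isom(X)$, of the ``pulled back'' actions of the $\Gamma_i$, and then to read off the equivariant approximations. First I would fix $\varepsilon_i\to0$ together with $\varepsilon_i$-GHAs $\phi_i:(X_i,p_i)\to(X,p)$ and approximate inverses $\bar\phi_i:(X,p)\to(X_i,p_i)$, so that $\phi_i,\bar\phi_i$ are base-point preserving $\Psi(\varepsilon_i)$-GHAs with $d(\phi_i\bar\phi_i(x),x)\le\Psi(\varepsilon_i)$ and $d(\bar\phi_i\phi_i(y),y)\le\Psi(\varepsilon_i)$ on any fixed ball once $i$ is large. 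For $\gamma\in\Gamma_i$ put $\widehat\gamma:=\phi_i\circ\gamma\circ\bar\phi_i$; on $B_R(p)$ this is a $\Psi(\varepsilon_i|R)$-almost isometry of $X$ as long as $d(p_i,\gamma p_i)\le R$. Since closed balls in $X$ are compact, for each fixed $R$ the family $\{\widehat\gamma\mid i\in\N,\ \gamma\in\Gamma_i,\ d(p_i,\gamma p_i)\le R\}$, restricted to $B_R(p)$, is equicontinuous with precompact orbits, so Arzel\`a--Ascoli applies; likewise each $\Gamma_i(R)$ is compact.

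Next I would pass to a subsequence and define $\Gamma$. For $R\in\N$ let $\mathcal S_i^R$ be the set of restrictions to $B_R(p)$ of the maps $\widehat\gamma$ with $\gamma\in\Gamma_i(R)$; since $\Gamma_i(R)$ is compact and $\gamma\mapsto\widehat\gamma$ is continuous, $\mathcal S_i^R$ is a compact subset of the (compact) space of almost isometries $B_R(p)\to X$. The hyperspace of compact subsets with the Hausdorff metric is compact, so after a diagonal argument over $R=1,2,\dots$ I may assume $\mathcal S_i^R\to\mathcal S^R$ in the Hausdorff metric for every $R$. I then declare $g\in\Isom(X)$ to lie in $\Gamma$ if $g|_{B_R(p)}\in\mathcal S^R$ for all $R$; equivalently, if there exist $\gamma_i\in\Gamma_i$ with bounded displacement such that $\widehat{\gamma_i}\to g$ uniformly on compact sets. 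Each such limit is a genuine isometry of $X$: it is distance preserving because $\varepsilon_i\to0$, and it is onto because the inverse elements $\widehat{\gamma_i^{-1}}$ subconverge to a distance preserving map that is a two-sided inverse. That $\Gamma$ is a subgroup follows by composing and inverting approximating sequences, using $d(\widehat{\gamma\gamma'}(x),\widehat\gamma\,\widehat{\gamma'}(x))\le\Psi(\varepsilon_i|R)$ and $d(\widehat{\gamma^{-1}}(x),\widehat\gamma^{-1}(x))\le\Psi(\varepsilon_i|R)$ on $B_R(p)$, with $e\in\Gamma$ witnessed by $\gamma_i=e$; closedness of $\Gamma$ in $\Isom(X)$ is immediate from the description through the closed sets $\mathcal S^R$.

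Finally, for each $i$ in the chosen subsequence I would produce a triple $(\phi_i,\varphi_i,\psi_i)$ and verify (\ref{almostcommutes}) with error $\to0$. Given $\gamma\in\Gamma_i$ with bounded displacement, the Hausdorff convergence $\mathcal S_i^{R}\to\mathcal S^{R}$ yields $\varphi_i(\gamma)\in\Gamma$ with $d(\varphi_i(\gamma)(x),\widehat\gamma(x))\le\Psi(\varepsilon_i)$ on the corresponding ball, and then
\[
d\big(\varphi_i(\gamma)\phi_i(x),\phi_i(\gamma x)\big)\le d\big(\varphi_i(\gamma)\phi_i(x),\phi_i\gamma\bar\phi_i\phi_i(x)\big)+\Psi(\varepsilon_i)\le\Psi(\varepsilon_i),
\]
using $\bar\phi_i\phi_i\approx\id$ and $\varphi_i(\gamma)\approx\widehat\gamma$. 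Conversely, given $g\in\Gamma$ with bounded displacement the same convergence provides $\psi_i(g)\in\Gamma_i$ with $\widehat{\psi_i(g)}$ within $\Psi(\varepsilon_i)$ of $g$ on the relevant ball, and $d(g\,\phi_i(x),\phi_i(\psi_i(g)x))\le\Psi(\varepsilon_i)$ follows the same way. Letting the displacement bounds tend to infinity with $i$ and setting $\varphi_i(e)=\psi_i(e)=e$, the triples $(\phi_i,\varphi_i,\psi_i)$ are $\Psi(\varepsilon_i)$-equivariant GHAs, giving $(X_i,p_i,\Gamma_i)\myarrow{GH}(X,p,\Gamma)$.

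The step I expect to be the main obstacle is the two-sided matching in the last paragraph together with the well-definedness of $\Gamma$: one must ensure simultaneously that \emph{every} $\gamma\in\Gamma_i$ with controlled displacement is $\Psi(\varepsilon_i)$-close to an element of $\Gamma$, and that every element of $\Gamma$ is $\Psi(\varepsilon_i)$-close to an element of $\Gamma_i$. This is precisely what the Hausdorff convergence of the compact hyperspaces encodes, and it is the reason one is forced to pass to a subsequence; a related subtlety, handled by carrying the inverse elements along, is that a uniform-on-compacts limit of almost isometries is a surjective isometry of all of $X$.
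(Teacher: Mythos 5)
The paper gives no proof of this lemma; it is recalled from \cite{FY} and \cite{Ro1}, so there is no ``paper's own proof'' to compare against. Your argument is, in substance, the standard Fukaya--Yamaguchi precompactness argument from \cite{FY}: pull back $\Gamma_i$ to almost-isometries $\widehat\gamma=\phi_i\circ\gamma\circ\bar\phi_i$ of $X$, apply Arzel\`a--Ascoli on balls (using that the $X_i$ and $X$ are proper), pass to Hausdorff limits $\mathcal S^R$ of the compact sets $\mathcal S_i^R$ via a diagonal subsequence, define $\Gamma$ from those limits, and read off the equivariant approximations from two-sided Hausdorff closeness. This is correct and is essentially the argument the cited references give.

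One small point worth making explicit: you define $\Gamma$ as the set of $g\in\Isom(X)$ with $g|_{B_R(p)}\in\mathcal S^R$ for all $R$, but in the last paragraph you also need the converse inclusion, namely that every $f\in\mathcal S^R$ extends to a genuine global isometry belonging to $\Gamma$. This follows by the same diagonal argument (the $\gamma_i$ realizing $f$ have displacement $\le R+o(1)$, hence $\widehat{\gamma_i}$ subconverges on every larger ball, and the inverse elements give surjectivity), and you do gesture at this when you carry the inverses along, but the reader would benefit from having it stated as the identity $\mathcal S^R=\{\,g|_{B_R(p)}:g\in\Gamma,\ d(p,gp)\le R\,\}$. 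A second, purely cosmetic remark: the maps $\varphi_i,\psi_i$ in the definition of a $\delta$-equivariant GHA need not be homomorphisms, so you are not obligated to make $\varphi_i$ respect products; your construction correctly produces set maps only, which is all that is required here.
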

	
	\begin{lemma} \label{eqconquotient}
		If $(X_i, p_i, \Gamma_i)\myarrow{GH}(X,p,\Gamma)$, then $(X_i/\Gamma_i, [p_i])\myarrow{GH}(X/\Gamma,[p])$, where $[\cdot]$ denotes the equivalence class.
	\end{lemma}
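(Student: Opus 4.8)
The plan is to push the equivariant approximations down to the quotients. Fix, for each $i$, a $\delta_i$-equivariant GHA $(h_i,\varphi_i,\psi_i)$ from $(X_i,p_i,\Gamma_i)$ to $(X,p,\Gamma)$ with $\delta_i\to 0$, and recall that the quotient distance is $d_{X/\Gamma}([a],[b])=\inf_{g\in\Gamma}d(a,gb)$ (closedness of $\Gamma$ in $\Isom(X)$ is what makes this a genuine metric, and likewise on $X_i/\Gamma_i$). Define $\bar h_i\colon X_i/\Gamma_i\to X/\Gamma$ by choosing a representative $x$ of $[x]$ and setting $\bar h_i([x])\triangleq[h_i(x)]$; note $\bar h_i([p_i])=[p]$ since $h_i(p_i)=p$. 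The choice of representative costs at most $\delta_i$: if $x'=\gamma x$ with $\gamma\in\Gamma_i$, then because $\varphi_i(\gamma)\in\Gamma$ we get $d_{X/\Gamma}([h_i(x)],[h_i(\gamma x)])\le d(\varphi_i(\gamma)h_i(x),h_i(\gamma x))\le\delta_i$. So it suffices to prove that $\bar h_i$ is a $\Psi(\delta_i)$-GHA, $\Psi(\delta_i)\to 0$, on the relevant balls about $[p_i]$.

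The crux is the metric estimate $|d_{X_i/\Gamma_i}([x],[y])-d_{X/\Gamma}([h_i(x)],[h_i(y)])|\le\Psi(\delta_i)$ for $[x],[y]$ in a fixed ball of radius $R$ about $[p_i]$. The key observation is that for points at bounded distance from the basepoint the infimum defining the quotient distance is realized, up to any $\epsilon>0$, by group elements in a bounded subset $\Gamma_i(R')$ with $R'=R'(R)$: if $d(p_i,\gamma p_i)$ is too large then $d(x,\gamma y)\ge d(p_i,\gamma p_i)-d(p_i,x)-d(p_i,y)$ already exceeds the competitor $d(x,y)$. Choosing representatives $x,y$ with $d(p_i,x),d(p_i,y)\le R+1$ and an almost-minimizer $\gamma\in\Gamma_i(R')$, then for $i$ large enough that $\delta_i^{-1}>R'+R$ the points $x,\gamma y$ lie in the domain of $h_i$ and $\gamma$ in that of $\varphi_i$, so $d(h_i(x),\varphi_i(\gamma)h_i(y))\le d(h_i(x),h_i(\gamma y))+\delta_i\le d(x,\gamma y)+2\delta_i$, which gives $d_{X/\Gamma}([h_i(x)],[h_i(y)])\le d_{X_i/\Gamma_i}([x],[y])+\Psi(\delta_i)$. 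For the reverse inequality one runs the same argument with $\psi_i$ in place of $\varphi_i$: an almost-minimizer $g\in\Gamma$ for $d_{X/\Gamma}([h_i(x)],[h_i(y)])$ lies in a bounded $\Gamma(R'')$, $R''=R''(R)$ (the same triangle-inequality estimate, now using $h_i(p_i)=p$ to bound $d(p,gp)$ in terms of $R$), the point $\psi_i(g)y$ is likewise at distance $O(R)$ from $p_i$, and $d(x,\psi_i(g)y)\le d(h_i(x),h_i(\psi_i(g)y))+\delta_i\le d(h_i(x),gh_i(y))+2\delta_i$ yields $d_{X_i/\Gamma_i}([x],[y])\le d_{X/\Gamma}([h_i(x)],[h_i(y)])+\Psi(\delta_i)$.

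Finally I would verify $\Psi(\delta_i)$-surjectivity of $\bar h_i$ onto balls: given $[\bar y]\in X/\Gamma$ with $d([p],[\bar y])\le R$, pick a representative $\bar y$ with $d(p,\bar y)\le R+\delta_i$ and use that $h_i$ is $\delta_i$-onto $B_{R+\delta_i}(p)$ to produce $x\in X_i$ with $d(h_i(x),\bar y)\le\delta_i$, so $d_{X/\Gamma}(\bar h_i([x]),[\bar y])\le\delta_i$. Combined with the basepoint condition and the metric estimate, this exhibits a sequence of $\Psi(\delta_i)$-GHA's from $(X_i/\Gamma_i,[p_i])$ to $(X/\Gamma,[p])$, which is the assertion. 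The only genuinely delicate point is the bookkeeping in the middle step: keeping all the group elements and points one invokes inside the domains on which the triple $(h_i,\varphi_i,\psi_i)$ is defined and controlled. In the compact case ($X_i$ bounded, $\Gamma_i=G_i$ acting on all of $X_i$) there are no domains to track and the argument reduces to the two chains of inequalities above.
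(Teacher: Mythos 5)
The paper does not give a proof of this lemma; it is stated as a known fact and cited from \cite{FY} and \cite{Ro1}. Your argument is correct and is essentially the standard one in those references: push the equivariant GHA $(h_i,\varphi_i,\psi_i)$ down to a map of quotients, observe that near-minimizers of the infimum defining the quotient distance between points in a fixed ball about the basepoint must lie in a bounded piece $\Gamma_i(R')$ (respectively $\Gamma(R'')$) of the group, so that $\varphi_i$ (respectively $\psi_i$) is defined on them, and then verify the distance estimate in both directions and the $\Psi(\delta_i)$-surjectivity on balls. One small organizational cleanup: stipulate already when you define $\bar h_i$ that representatives of $[x]\in B_R([p_i])$ are taken in $B_{R+1}(p_i)$ — your ``well-defined up to $\delta_i$'' remark implicitly needs $\gamma$ to lie in the domain of $\varphi_i$ and $x,\gamma x$ in the domain of $h_i$, which is exactly what the bounded-representative convention guarantees and what you do use later in the metric estimate.
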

	
	We often apply equivariant convergence to universal covering spaces with fundamental group actions; let $\pi_i: (\tilde X_i,
	\tilde p_i)\to (X_i,p_i)$ be universal covers and $\Gamma_i$ be the fundamental group of $X_i$ such that $\tilde X_i/\Gamma_i=X_i$, if $(\tilde X_i,\tilde p_i)\myarrow{GH}(Y,y^*)$, then by Lemma \ref{eqconexist} and \ref{eqconquotient}, passing to a subsequence, we have the following equivariant commutative diagram:
	
	\begin{equation}\label{Equivariantdiagram}
	\xymatrix{
		(\tilde{X}_i,\tilde{p}_i,\Gamma_i) \ar[rr]^{GH}\ar[d]_{\pi_i}&&(Y,y^*,G) \ar[d]^{\pi_\infty} \\
		(X_i,p_i)\ar[rr]^{GH}&  & (Y/G,p),}
	\end{equation}
	where $\pi_\infty$ is the limit map of $\pi_i$.
	\subsection{The Canonical Reifenberg Theorem}
	
	A key tool in our proof of Theorems \ref{Main} and \ref{GroupStatbility} is the Canonical Reifenberg theorem
	in \cite{CJN}: any $(\delta,n)$-splitting map on a $2$-ball is bi-H\"older and non-degenerate
	in the $1$-ball (see Theorem \ref{CanonicalReifenbergThm}).
	
	Let $M$ be an $n$-manifold with $\Ric_{M}\ge-(n-1)\delta$ and $p\in M$. 
	\begin{definition}\label{sp}
		A $(\delta,k)$-splitting map, $u:B_r(p)\to\R^k$, means that, $u\in C^2$ satisfies, for each $\alpha,\beta=1,2,..,k$,
		
		(\ref{sp}.1) $\Delta u^\alpha=0$,
		
		(\ref{sp}.2) $\mysup{B_r(p)}{|\triangledown u^\alpha|}\le 1+\delta$,
		
		(\ref{sp}.3) $\dashint_{B_r(p)}|g(\triangledown u^\alpha,\triangledown u^\beta)-\delta^{\alpha\beta}|\le\delta$,
		
		(\ref{sp}.4) $r^2\dashint_{B_r(p)}|\Hess u^\alpha|^2\le\delta^2$.

	\end{definition}
	
	A geometric consequence of a $(\delta,k)$-splitting map is,
	
	\begin{theorem} \label{splittingthm}
		For $\delta\le\delta(n)$, the following holds. Let $(M,p)$ be an $n$-manifold with $Ric_{M}\ge-(n-1)\delta$.
		
		(\ref{splittingthm}.1) If there exists a $(\delta,k)$-splitting map $u: B_2(p)\to\R^k$, then $B_1(p)$ is $\Psi(\delta|n)$-GH close to $B_1(0^k,x^*)\subset\R^k\times X$ for some length space $X$.
		
		(\ref{splittingthm}.2) If $B_4(p)$ is $\delta$-close to $B_4(0^k,x^*)\subset\R^k\times X$, then there exists a $(\Psi(\delta|n),k)$-splitting map $u:B_2(p)\to \R^k$.
	\end{theorem}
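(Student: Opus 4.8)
The plan is to prove both parts by contradiction and compactness, reducing each to a rigidity statement on a Ricci limit space; the analytic input is the Cheeger--Colding theory of harmonic functions under measured Gromov--Hausdorff (mGH) convergence. For (\ref{splittingthm}.1), suppose the claim fails: there are $\delta_i\to 0$, pointed $n$-manifolds $(M_i,p_i)$ with $\Ric_{M_i}\ge-(n-1)\delta_i$, and $(\delta_i,k)$-splitting maps $u_i\colon B_2(p_i)\to\R^k$ normalized by $u_i(p_i)=0$, yet for some fixed $\varepsilon_0>0$ the ball $B_1(p_i)$ is not $\varepsilon_0$-GH close to $B_1(0^k,x^\ast)$ for any length space $X$ and any $x^\ast\in X$. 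By Gromov precompactness pass to a subsequence with $(M_i,p_i)\myarrow{GH}(Y,p_\infty)$. By (\ref{sp}.2) the maps $u_i$ are uniformly Lipschitz, so after a further subsequence they converge uniformly on $B_2(p_\infty)$ to a map $u_\infty\colon B_2(p_\infty)\to\R^k$. Cheeger--Colding theory then yields that each $u_\infty^\alpha$ is harmonic with $|\nabla u_\infty^\alpha|\le 1$ and that $\nabla u_i^\alpha\to\nabla u_\infty^\alpha$ strongly in $L^2$ on compact subsets; combined with (\ref{sp}.3) this forces $\langle\nabla u_\infty^\alpha,\nabla u_\infty^\beta\rangle=\delta^{\alpha\beta}$ almost everywhere, while the lower semicontinuity of $\dashint|\Hess\cdot|^2$ along such limits together with (\ref{sp}.4) forces $\Hess u_\infty^\alpha=0$ weakly. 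By the Cheeger--Colding splitting theorem, a unit-gradient Hessian-free function on a Ricci limit space splits off an isometric line; applied to the orthonormal tuple $u_\infty^1,\dots,u_\infty^k$, this makes $B_2(p_\infty)$ isometric to a ball in $\R^k\times X$ with $u_\infty$ the Euclidean projection and $u_\infty(p_\infty)=0^k$. Hence $B_1(p_\infty)$ is isometric to $B_1(0^k,x^\ast)\subset\R^k\times X$; since $B_1(p_i)\to B_1(p_\infty)$ in the GH sense, this contradicts the choice of $(M_i,p_i)$ for $i$ large.

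For (\ref{splittingthm}.2), argue again by contradiction: there are $\delta_i\to 0$, pointed $n$-manifolds $(M_i,p_i)$ with $\Ric_{M_i}\ge-(n-1)\delta_i$, length spaces $X_i$ and points $x_i^\ast$ with $B_4(p_i)$ being $\delta_i$-GH close to $B_4(0^k,x_i^\ast)\subset\R^k\times X_i$, yet $B_2(p_i)$ admitting no $(\varepsilon_0,k)$-splitting map for a fixed $\varepsilon_0>0$. Passing to a subsequence, $(M_i,p_i)\myarrow{GH}(Y,p_\infty)$ and $(X_i,x_i^\ast)\myarrow{GH}(X,x^\ast)$, and $Y$ is isometric on $B_4(p_\infty)$ to $B_4(0^k,x^\ast)\subset\R^k\times X$, where the Euclidean coordinate functions $b^1,\dots,b^k$ are harmonic with orthonormal unit gradients and vanishing Hessian. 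Transport $b$ restricted to $\partial B_3(p_\infty)$ to approximate Dirichlet data on $\partial B_3(p_i)$ via the almost isometries, and let $u_i^\alpha$ solve $\Delta u_i^\alpha=0$ on $B_3(p_i)$ with that data. By the functional convergence of the Laplacian under mGH convergence with a Ricci lower bound (Cheeger--Colding), $u_i^\alpha\to b^\alpha$ uniformly and in $W^{1,2}$ on $B_{5/2}(p_\infty)$, so that $\dashint_{B_2(p_i)}|\Hess u_i^\alpha|^2\to 0$ and $\dashint_{B_2(p_i)}|\langle\nabla u_i^\alpha,\nabla u_i^\beta\rangle-\delta^{\alpha\beta}|\to 0$; meanwhile the Cheng--Yau gradient estimate on $B_{5/2}(p_i)$, the transported boundary data being $(1+\Psi(\delta_i|n))$-Lipschitz, gives $\sup_{B_2(p_i)}|\nabla u_i^\alpha|\le 1+\Psi(\delta_i|n)$. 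Thus $u_i|_{B_2(p_i)}$ is a $(\Psi(\delta_i|n),k)$-splitting map for $i$ large, a contradiction.

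The step expected to be the main obstacle is the analytic input shared by both parts: the stability of the integrated Bochner inequality, namely that $\dashint|\nabla u_i|^2$ and $\dashint|\Hess u_i|^2$ behave correctly under mGH limits of harmonic functions with a lower Ricci bound, together with the identification of a unit-gradient Hessian-free function on a Ricci limit space with an isometric $\R$ factor. These are precisely the results of \cite{CC1}, \cite{Ch} and \cite{CJN}, which one may alternatively invoke wholesale; the remaining ingredients --- Gromov precompactness, Arzel\`a--Ascoli for the uniformly Lipschitz maps $u_i$, the Cheng--Yau gradient estimate, and solvability of the Dirichlet problem --- are standard.
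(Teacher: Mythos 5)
The paper does not prove Theorem~\ref{splittingthm} at all: it is quoted as a black box from \cite{CC1} (early version) and \cite{CJN} (fixed ratio of radii), so there is no ``paper proof'' to compare against. Your compactness-and-contradiction sketch is a reasonable high-level route to the statement, but it is not how \cite{CC1} or \cite{CJN} argue (they give direct quantitative estimates), and more importantly it quietly passes over exactly the step that distinguishes \cite{CJN} from \cite{CC1}.

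Concretely, in part (1) you invoke ``the Cheeger--Colding splitting theorem'' to conclude that a limit function $u_\infty$ on $B_2(p_\infty)$ with orthonormal gradients and vanishing Hessian gives an isometric product structure on $B_1(p_\infty)$. The splitting theorem as usually stated is for a complete Ricci-limit space containing a line; what you need here is a \emph{local} splitting rigidity on a metric ball of a Ricci-limit space, with the splitting holding on a ball whose radius is a fixed fraction of the radius where $u_\infty$ is defined. This local ball rigidity is precisely what makes the ``ratio of radii need not tend to infinity'' version of Theorem~\ref{splittingthm} nontrivial, and is the content of \cite{CJN}; a one-line appeal to the (global) splitting theorem does not deliver it. In the $r/R\to\infty$ regime of \cite{CC1} your compactness argument would indeed close, because then $u_\infty$ is defined on arbitrarily large balls and one does get genuine lines; at fixed ratio this is exactly the gap.

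Two smaller inaccuracies are worth flagging. First, the Cheng--Yau gradient estimate gives $\sup_{B_2}|\nabla u_i^\alpha|\le C(n)$, not $1+\Psi(\delta_i|n)$; to upgrade to $1+\Psi$ one uses that $|\nabla u_i^\alpha|^2$ is almost subharmonic (Bochner plus the Ricci lower bound) together with the mean-value inequality and the integral bound $\dashint_{B_{5/2}}|\nabla u_i^\alpha|^2\le 1+\Psi$. Second, the control of $\dashint|\Hess u_i^\alpha|^2$ in both parts is not really ``lower semicontinuity under GH limits''; it comes from integrating the Bochner identity against a cutoff and using the almost-constancy of $|\nabla u_i^\alpha|^2$ (this is the $L^2$-Hessian estimate of \cite{CC1}). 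Neither of these is fatal, but as written the argument would not pass: the pointwise gradient bound and the Hessian bound are the two nontrivial conclusions of the splitting machinery, and attributing them to Cheng--Yau and to generic lower semicontinuity conceals where the work is.

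In short: the proposal correctly lists the ingredients (harmonic approximation, Bochner/integrated Hessian estimates, convergence of harmonic functions, splitting rigidity on the limit) and the contradiction-compactness strategy is viable in principle, but the local ball-splitting rigidity at fixed ratio is asserted rather than supplied, and since that is precisely the improvement the paper attributes to \cite{CJN}, the proposal does not actually contain a proof --- it reduces the statement to a result that is essentially equivalent to it.
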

	
	Comparing to early version of Theorem \ref{splittingthm} in \cite{CC1}, here the improvement is that the
	ratio of radii of balls is not necessarily tends to infinity (\cite{CJN}).

	In the case that $k=n$, the non-degeneracy of a $(\delta,n)$-splitting is crucial for
	us.
	
	\begin{theorem}\label{CanonicalReifenbergThm}
		There exists a $\delta(n)>0$ satisfying the follows. Let $(M,p)$ be an $n$-manifold satisfying $\Ric_{M}\ge-(n-1)\delta$ and $d_{GH}(B_4(p),B_4(0^n))\le\delta$. If $\delta\le\delta(n)$, Then any $(\delta,n)$-splitting map $u:B_2(p)\to\R^n$ is non-degenerate on $B_1(p)$.
	\end{theorem}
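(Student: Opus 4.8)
The plan is to reduce the statement to a pointwise, all-scales study of the given splitting map $u$ near each $x\in B_1(p)$, and to extract invertibility of $\myd u_x$ from the limiting behaviour of $u$ on the balls $B_s(x)$ as $s\to0$. The first step is to record that every ball inside $B_2(p)$ is almost Euclidean at every scale: from $d_{GH}(B_4(p),B_4(0^n))\le\delta$, standard Cheeger--Colding theory --- volume continuity, Bishop--Gromov monotonicity, and the quantitative almost-metric-cone rigidity --- gives $\vol(B_s(x))\ge(1-\Psi(\delta|n))\vol(B_s(0^n))$ and hence $d_{GH}(B_s(x),B_s(0^n))\le\Psi(\delta|n)\,s$ for all $x\in B_2(p)$ and all $0<s\le2$. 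In particular nothing collapses, in any direction and at any scale, inside $B_2(p)$.

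The second step is a canonical normalization of $u$ at every scale. Fix $x\in B_1(p)$ and, for $0<s\le1$, let $u_{x,s}$ be the restriction of $u$ to $B_s(x)$, recentered at $u(x)$, rescaled by $s^{-1}$, and regarded on $B_1(0^n)$ through the $\Psi(\delta|n)s$-Gromov--Hausdorff identification of Step 1. The point is that $u_{x,s}$ need not itself be a good splitting map at small $s$ (the scale-invariant Hessian bound (\ref{sp}.4) can fail), but a transformation (normalization) theorem for splitting maps (cf. \cite{CJN}) --- using $\sup|\triangledown u^\alpha|\le1+\delta$, the integral Hessian bound, interior elliptic estimates, and Step 1 --- provides matrices $A_{x,s}\in GL(n)$ with $C(n)^{-1}\le\|A_{x,s}^{\pm1}\|\le C(n)$ such that $A_{x,s}\circ u_{x,s}$ is a genuine $(\Psi(\delta|n),n)$-splitting map whose best affine approximation on $B_1(0^n)$ is $\id$. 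Comparing these normalizations at consecutive dyadic scales gives a slow-variation estimate $\|A_{x,2s}A_{x,s}^{-1}-I\|\le\Psi(\delta|n)$ whose sum over dyadic scales is, by a telescoping argument as in \cite{CC1}, controlled by the global integral Hessian bound (\ref{sp}.4), hence is $\le\Psi(\delta|n)$. Since there is no collapse (Step 1), the $A_{x,s}$ do not degenerate; being Cauchy they converge to some $A_{x,0}\in GL(n)$ which, the total variation being small, is $\Psi(\delta|n)$-close to an orthogonal matrix.

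The third step is the conclusion. Because $u$ is $C^2$, the maps $u_{x,s}$ converge uniformly on $B_1(0^n)$, as $s\to0$, to the linear map $\myd u_x$ (read in normal coordinates at $x$). On the other hand, for each $s$ the map $A_{x,s}\circ u_{x,s}$ is a $(\Psi(\delta|n),n)$-splitting map with affine part $\id$, and standard interior estimates for such maps (the maximum principle together with the gradient and Hessian bounds) give $\|u_{x,s}-A_{x,s}^{-1}\|_{C^0(B_1(0^n))}\le C(n)\Psi(\delta|n)$. Letting $s\to0$ yields $\|\myd u_x-A_{x,0}^{-1}\|\le C(n)\Psi(\delta|n)$; since $A_{x,0}^{-1}$ is within $\Psi(\delta|n)$ of an orthogonal matrix, $\myd u_x$ is within $C(n)\Psi(\delta|n)$ of $O(n)$, so for $\delta\le\delta(n)$ it is invertible with $|\det\myd u_x|\ge\tfrac12$. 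As $x\in B_1(p)$ was arbitrary, $u$ is non-degenerate on $B_1(p)$, and the same scale-invariant estimates simultaneously deliver the bi-H\"older bound for $u$.

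The main obstacle is the second step. Since only a lower Ricci bound is assumed, $M$ need not converge smoothly to $\R^n$, so one cannot simply invoke $C^1$-convergence of harmonic functions; every bit of $C^1$-type control in the argument must be squeezed out of harmonicity of the $u^\alpha$, the integral Hessian bound, and the all-scales Euclidean closeness of Step 1. Establishing the transformation theorem together with the summability of its slow-variation estimate --- i.e.\ controlling the Hessian of $u$ at all scales by a monotonicity/telescoping argument and running the resulting Campanato/Morrey-type iteration --- is precisely the heart of the Canonical Reifenberg theorem of \cite{CJN}. An equivalent route replaces Steps 2--3 by a contradiction/compactness argument: were the conclusion to fail along $\delta_i\to0$ one would obtain degenerate points $x_i\in B_1(p_i)$, the $u_i$ would converge (by Cheeger--Colding theory) to a Riemannian isometry $u_\infty\colon B_1(0^n)\to\R^n$, and the contradiction would again reduce to upgrading the Gromov--Hausdorff convergence to convergence of the differentials, i.e.\ to the same equi-H\"older estimate for $\triangledown u_i$.
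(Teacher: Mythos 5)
The paper does not prove Theorem \ref{CanonicalReifenbergThm}; it is imported from \cite{CJN} (as a consequence of their Transformation Theorem) and used as a black box in the proof of Lemma \ref{NonDegenaracyofSplittingMap}. Your three-step sketch --- almost-Euclidean at all scales inside $B_2(p)$, scale-by-scale matrix normalization $A_{x,s}$, drift summability and blow-up identification $\myd u_x=\lim_{s\to 0}A_{x,s}^{-1}$ --- correctly reproduces the architecture of the \cite{CJN} argument, and you are right that the substantive analysis lies in Step 2.

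The one concrete misstatement to flag is in that Step 2: the dyadic drift sum $\sum_j\|A_{x,2^{-j}}A_{x,2^{-j-1}}^{-1}-I\|$ is \emph{not} controlled ``by a telescoping argument as in \cite{CC1}'' from the single top-scale bound (\ref{sp}.4). That bound is an $L^2$ Hessian estimate at scale $r\approx 2$ only; for $x\in B_1(p)$ and small $s$, the trivial consequence is $s^2\dashint_{B_s(x)}|\Hess u|^2\le C(n)\,s^{2-n}\delta^2$, which blows up for $n\ge 3$, so no direct telescoping of (\ref{sp}.4) yields scale-invariant smallness at lower scales. Producing a fresh Hessian bound at each smaller scale \emph{after renormalization} is exactly the content of the Transformation Theorem of \cite{CJN}; it is proved by an induction on scales combined with a dichotomy that uses the all-scales Reifenberg condition of your Step 1 at every step, and the summability of the drifts is a separate superlevel-set/weak-type estimate layered on top, not a consequence of (\ref{sp}.4) alone. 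Since you explicitly defer precisely this to \cite{CJN} --- as does the paper --- your proposal is an honest synopsis rather than a self-contained proof; the item to repair is the claimed mechanism behind drift summability.
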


	\noindent\\[4mm]

	\section{Local rewinding volume and $(\delta,k)$-splitting maps}
	
	In this section, we will prove each $(\delta,k)$-splitting map on $M$ in Theorem \ref{Main} is non-degenerate (Lemma \ref{NonDegenaracyofSplittingMap}). In the next section, we will glue splitting maps together to construct
	a global map, $f: M\to N$, via the techniques of center of mass and use Lemma \ref{NonDegenaracyofSplittingMap} to verify the non-degeneracy of $\myd f$.
	
	For $M, N$ as in Theorem \ref{Main}, without loss of generality, by scaling we
	we may assume that $\Ric_{M}\ge-(n-1)\delta$, $\widetilde{\vol}(B_8(p))\ge v$, $|\sec_N|\le\delta$,
	$\inj_N\ge\delta^{-1}$, $d_{GH}(M,N)\le\delta$. We have $d_{GH}(B_8(p),B_8(0^k))\le\Psi(\delta|n)$.
	
	The following is the key lemma through our paper.

	\begin{lemma}\label{NonDegenaracyofSplittingMap}
		
		For $n,v>0$, there exists $\delta(n,v)>0$ satisfying the follows. Let $n$-manifold $(M,p)$ satisfy $$\Ric_{M}\ge-(n-1)\delta,\quad d_{GH}(B_8(p),B_8(0^k))\le\delta\le\delta(n,v),\quad \widetilde{\vol}(B_8(p))\ge v.$$  Then any $(\delta,k)$-splitting map $u:B_4(p)\to\R^k$ is non-degenerate on $B_2(p)$.
		
	\end{lemma}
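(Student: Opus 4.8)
The plan is to argue by contradiction, to lift everything to the local Riemannian universal cover, and to reduce the non-degeneracy to the non-collapsed Canonical Reifenberg Theorem \ref{CanonicalReifenbergThm}; the rewinding volume hypothesis enters exactly in making the cover almost Euclidean. Concretely, suppose the conclusion fails: there are $\delta_i\to 0$, $n$-manifolds $(M_i,p_i)$ and, after passing to a subsequence, a fixed $k\le n$ with $\Ric_{M_i}\ge-(n-1)\delta_i$, $d_{GH}(B_8(p_i),B_8(0^k))\le\delta_i$ and $\widetilde{\vol}(B_8(p_i))\ge v$, together with $(\delta_i,k)$-splitting maps $u_i:B_4(p_i)\to\R^k$ that are degenerate at some $q_i\in B_2(p_i)$. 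Let $\pi_i:(\widetilde{B_8(p_i)},\tilde p_i)\to(B_8(p_i),p_i)$ be the (incomplete) Riemannian universal cover, with deck group $\Gamma_i$. Then $\widetilde{B_8(p_i)}$ still has $\Ric\ge-(n-1)\delta_i$, and $\vol(B_8(\tilde p_i))=\widetilde{\vol}(B_8(p_i))\ge v$, so the covers are non-collapsed. By Gromov precompactness, Lemmas \ref{eqconexist} and \ref{eqconquotient}, and the equivariant diagram \ref{Equivariantdiagram}, after a further subsequence $(\widetilde{B_8(p_i)},\tilde p_i,\Gamma_i)$ converges equivariantly to some $(Y,\tilde y,G)$ with $B_8(p_i)=\widetilde{B_8(p_i)}/\Gamma_i$ converging to $Y/G$; since $d_{GH}(B_8(p_i),B_8(0^k))\to0$ we get $Y/G=B_8(0^k)\subset\R^k$, and $Y$ is an $n$-dimensional non-collapsed Ricci limit with $\Ric_Y\ge0$.

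The heart of the argument, and the step I expect to be the main obstacle, is to show that the cover is almost Euclidean: $B_r(\tilde p_i)$ is $\Psi(\delta_i|n,v)$-close to $B_r(0^n)\subset\R^n$ for some definite $r>4$, equivalently $B_r(\tilde y)$ is isometric to $B_r(0^n)$. This is essentially the assertion (cf. the introduction) that under the hypotheses $M$ is uniformly local rewinding Reifenberg near $p$. To prove it, I would note that each coordinate direction of the flat quotient $Y/G=B_8(0^k)$ gives, through $[\tilde y]$, a geodesic minimizing on its whole (long) segment; since $Y\to Y/G$ is a submetry, its horizontal lift through $\tilde y$ is again minimizing, so $Y$ carries $k$ mutually "independent" almost-lines through $\tilde y$. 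By the Cheeger--Colding splitting theorem, $Y$ splits isometrically, on a definite ball, as $\R^k\times Z$ with the $\R^k$ factor mapping isometrically onto $Y/G$; hence $G$ acts transitively on $Z$, which is a non-collapsed homogeneous Ricci limit with $\Ric\ge0$ of dimension $n-k$. On the other hand, since $\Gamma_i$ is (an image of) the local fundamental group of $B_8(p_i)$, the generalized Margulis lemma for manifolds with Ricci curvature bounded below forces $G$ to be nilpotent-by-finite; a compact-by-Euclidean homogeneous space of a nilpotent-by-finite Lie group is flat, and a limit of the simply connected covers $\widetilde{B_8(p_i)}$ cannot retain a compact flat factor, so $Z=\R^{n-k}$ and $Y=\R^n$.

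With the cover almost Euclidean, the remaining steps are routine. Set $\tilde u_i:=u_i\circ\pi_i$ on $\pi_i^{-1}(B_4(p_i))\supseteq B_4(\tilde p_i)$; as $\pi_i$ is a Riemannian local isometry, $\tilde u_i$ is harmonic with $|\nabla\tilde u_i^\alpha|\le1+\delta_i$. Downstairs $u_i$ converges to a Euclidean isometry $B_4(0^k)\to\R^k$, while the limit map $\pi_\infty:Y=\R^n\to Y/G=B_8(0^k)$ is, under the product splitting, the orthogonal projection; hence $\tilde u_i$ converges to an affine map $\R^n\to\R^k$ with orthonormal component gradients, and, by the standard gradient and Hessian estimates on non-collapsed Ricci limits, $\tilde u_i$ is a $(\Psi(\delta_i|n,v),k)$-splitting map on $B_{r'}(\tilde p_i)$ for a definite $r'>4$. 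Lift a minimal geodesic from $p_i$ to $q_i$ to get $\tilde q_i\in B_2(\tilde p_i)$; completing the (almost orthonormal) gradients of $\tilde u_i$ to an almost orthonormal frame and solving Dirichlet problems with near-linear boundary data on the almost Euclidean ball $B_4(\tilde q_i)$ produces $w_i:B_2(\tilde q_i)\to\R^{n-k}$ so that $\hat u_i:=(\tilde u_i,w_i):B_2(\tilde q_i)\to\R^n$ is a $(\Psi(\delta_i|n,v),n)$-splitting map. By Theorem \ref{CanonicalReifenbergThm}, for $i$ large $\hat u_i$ is non-degenerate at $\tilde q_i$, so $\myd\hat u_i(\tilde q_i)$ has rank $n$; its first $k$ components are $\myd\tilde u_i(\tilde q_i)$, which therefore has rank $k$, and since $\pi_i$ is a local isometry $\myd u_i(q_i)$ has rank $k$ — contradicting the degeneracy of $u_i$ at $q_i$, and proving the lemma.
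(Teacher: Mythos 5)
Your high-level outline — contradiction, lift to the local universal cover, extend the lifted splitting map to a full $(\cdot,n)$-splitting map, invoke Theorem~\ref{CanonicalReifenbergThm} — matches the structure of the paper's proof. And you correctly flag the crucial step yourself: showing that the cover looks Euclidean near the would-be degenerate point. The problem is that your argument for this step establishes a claim that is both stronger than what is needed and stronger than what your tools actually give, and it is here that the paper does something you have not.

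You try to prove that $B_r(\tilde p_i)$ is $\Psi(\delta_i)$-close to a Euclidean $r$-ball for a \emph{fixed} $r>4$, i.e.\ that the limit cover $Y$ is isometric to $\R^n$ on a definite ball. Your mechanism — horizontally lift the $k$ coordinate directions of $Y/G=B_8(0^k)$ and split via Cheeger--Colding — does not deliver this. The lifted geodesics in $Y$ are \emph{segments} of length $\sim 8$, not lines (the spaces $\widetilde{B_8(p_i)}$ are incomplete covers of $8$-balls; there is no room for lines in the base). The (almost) splitting theorem applied on a unit ball with geodesics of length $L$ yields closeness to a product with error $\Psi(1/L)+\Psi(\delta)$, and $\Psi(1/8)$ does not tend to $0$. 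So at any definite scale the split structure is only coarse, not isometric. The subsequent claim that $G$ acts transitively on the cross-section $Z$, that $Z$ is a compact homogeneous manifold, and that the generalized Margulis lemma forces $Z=\R^{n-k}$, is a chain of unjustified assertions: $Z$ need not be compact nor a manifold, $\Isom$ of an approximate product does not split, and the statement ``a compact-by-Euclidean homogeneous space of a nilpotent-by-finite Lie group is flat'' is not a theorem you can cite in this generality. The paper deliberately proves only the infinitesimal version of what you want: Lemma~\ref{RegularPoints} shows that every \emph{tangent cone} of $Y$ at $y^*$ is $\R^n$ — this is a cone argument that exploits the fact that on a metric cone the isometry group fixes the vertex and hence splits, something that is simply not available at a fixed finite scale.

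Because only the tangent-cone statement is available, the paper inserts a blow-up that your proof is missing. Choosing $r_i\to 0$ slowly and rescaling by $r_i^{-1}$ at $\tilde q_i$, the cover converges to $\R^n$ (tangent cone plus volume convergence / almost maximal volume rigidity), the lifted geodesic segments become arbitrarily long relative to the unit ball so the splitting error vanishes, and the rescaled map $\bar u_i=r_i^{-1}u_i$ remains a $(\Psi_i,k)$-splitting map and a GHA at the new scale. Only in this blown-up frame is $B_4(\tilde q_i)$ actually $\delta$-close to $B_4(0^n)$, which is what Theorem~\ref{CanonicalReifenbergThm} requires. Non-degeneracy of $\myd u_i$ at $q_i$ is scale-invariant, so the conclusion transfers back. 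Your final steps (complete the lifted splitting map to an $n$-splitting map, apply Theorem~\ref{CanonicalReifenbergThm}, contradict degeneracy) are fine once you are at the right scale; the missing idea is the rescaling, and the flawed idea is the attempt to prove Euclidean structure of the cover at a definite scale.
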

	
	\begin{proof}
		Argue by a contradiction, assuming that a sequence of complete $n$-manifolds $(M_i,p_i)$ with $$\Ric_{M_i}\ge-(n-1)\delta_i\to0,\quad d_{GH}(B_8(p_i),B_8(0^k))\le\delta_i,\quad \widetilde{\vol}(B_4(p_i))\ge v,$$
		and (by Theorem \ref{splittingthm}) there exists a $(\delta_i,k)$-splitting map, $u_i:B_4(p_i)\to\R^k$, which
		is singular at $q_i\in B_2(p_i)$.
		
		Let $\pi_i: (\widetilde{B_8(p_i)},\tilde p_i)\to (B_8(p_i),p_i)$ be the universal cover, and let
		$\tilde q_i\in B_2(\tilde p_i)$ such that $q_i=\pi_i(\tilde q_i)$. We may assume that
		$(B_{1}(\tilde q_i),\tilde q_i)\myarrow{GH}(Y,y^*)$. By Lemma \ref{RegularPoints} below, the tangent cone at $y^*$ is $\R^n$. By volume convergence and almost maximal volume rigidity (Theorem 0.1 and 0.8 in \cite{Co2}, see also Theorem A.1.5 in \cite{CC2}),
		for any $r_i\to 0$,  $(B_{r_i^{-1}}(\tilde q_i,r_i^{-1}\tilde d_i),\tilde q_i)\myarrow{GH}(\R^n,0^n)$, where $\tilde d_i$ is the distance function on $\widetilde{B_8(p_i)}$.

		By Theorem \ref{splittingthm}, $u_i:B_{1}(q_i)\to B_{1+\epsilon_i}(0^k)$ is an $\epsilon_i$-GHA. Given $r_i\to 0$, observe that by a standard diagonal argument, passing to a subsequence, (and a suitable re-arrange indices), the following holds:

		(a)\, $\bar u_i\triangleq r_i^{-1}u_i:(B_{r_i^{-1}}(q_i,r_i^{-1}d_i),q_i)\to(B_{r_i^{-1}+r_i}(0^k),0^k)$ is an $r_i$-GHA,
		
		(b)\, $\bar u_i|_{B_{2}(q_i,r_i^{-1}d_i)}:B_{2}(q_i,r_i^{-1}d_i)\to\R^k$ is an $(r_i,k)$-splitting map,
		
		(c)\, there exists an $r_i$-GHA $h_i:(B_{r_i^{-1}}(\tilde q_i,r_i^{-1}\tilde d_i),\tilde q_i)\to (B_{r_i^{-1}+r_i}(0^n),0^n)$.
		
		(a)-(c) imply the following commutative diagram,
		$$
		\xymatrix{
			(B_{r_i^{-1}}(\tilde q_i,r_i^{-1}\tilde d_i),\tilde q_i) \ar[rr]^{h_i}\ar[d]_{\pi_i}&&(\R^n,0^n) \ar[d]^{\pi_{\infty}} \\
			(B_{r_i^{-1}}(q_i,r_i^{-1}d_i),q_i)\ar[rr]^{\bar u_i}&  & (\R^k,0^k),}
		$$
		where $\pi_\infty$ is a limit map of $\pi_i$, hence a submetry. Because any
		submetry from $\R^n$ to $\R^k$ is the standard projection, we may assume $\pi_\infty(y^1,...,y^n)=(y^1,...,y^k)$ where $(y^1,..,y^n)$ are the standard coordinates in $\R^n$. By Theorem \ref{splittingthm} again, for large $i$ we may assume
		a $(\Psi_i,n)$-splitting map $v_i=(v_i^1,...,v_i^n): B_2(\tilde q_i,r_i^{-1}\tilde d_i)\to\R^n$ such that $|v_i-h_i|\le\Psi_i\to0$. Hence for each $\alpha=1,..,k$, $$|(\bar u_i\circ\pi_i)^\alpha-v_i^\alpha|=|(\bar u_i\circ\pi_i)^\alpha-(\pi_\infty\circ v_i)^\alpha|\le |\bar u_i\circ\pi_i-\pi_\infty\circ h_i|+\Psi_i\le\Psi_i'\to 0.$$
		Then $((\bar u_i\circ\pi_i)^1,..,(\bar u_i\circ\pi_i)^k,v^{k+1},...,v^n):B_1(\tilde q_i,r_i^{-1}\tilde d_i)\to\R^n$ is a $(\Psi_i'',n)$-splitting map with $\Psi_i''\to0$. Now apply Theorem \ref{CanonicalReifenbergThm}, $\myd u_i$ is non-degenerate at $q_i$ which yields a contradiction.
		
	\end{proof}

	The follow observation is proved in \cite{HKRX}. For convenient of readers, we give details here.

	\begin{lemma}\label{RegularPoints}
		Suppose we have the following equivariant commutative diagram,
		$$
		\xymatrix{
			(\tilde{M}_i,\tilde{p}_i,\Gamma_i) \ar[rr]^{GH}\ar[d]_{\pi_i}&&(Y,y_0,G) \ar[d]^{\pi_\infty} \\
			(M_i,p_i)\ar[rr]^{GH}&  & (X,x_0)=(Y/G,x_0).}
		$$
		where $\Ric_{M_i}\ge-(n-1)$, $\vol(B_1(\tilde p_i))\ge v>0$. If a tangent cone at $x_0$ is $\R^k$, then any tangent cone at $y_0$ is $\R^n$.
	\end{lemma}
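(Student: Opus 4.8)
The plan is to exploit the equivariant diagram together with the noncollapsing of the universal covers to force every tangent cone of $Y$ at $y_0$ to split off $\R^k$ (from the splitting of $X$) and then to use volume noncollapsing to upgrade this to a full $\R^n$. First I would observe that, since $\Ric_{M_i}\ge -(n-1)$ and $\vol(B_1(\tilde p_i))\ge v>0$, the noncollapsing volume convergence theorem applies to the universal covers: $Y$ is an $n$-dimensional noncollapsed Ricci limit space, so $\vol(B_1(y_0))\ge v(n,v)>0$ and in particular every tangent cone $C_{y_0}Y$ at $y_0$ is a metric cone with vertex whose cross-section has Hausdorff dimension $n-1$, and it carries a well-defined (renormalized) limit measure. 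Moreover $G$ acts on $Y$ by isometries with $Y/G=X$, and $\pi_\infty:Y\to X$ is a submetry.

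Next I would rescale. Fix $r_i\to 0$ realizing a chosen tangent cone at $x_0$: $(M_i, p_i, r_i^{-1}d_i)\to(\R^k,0^k)$ after passing to a subsequence (here I replace $p_i$ by points converging to $x_0$; since $M_i\to X$ and the statement is about the point $x_0$, I may as well take the base points to be $p_i$ with $x_0$ the limit). Pull this back through the diagram: rescaling $(\tilde M_i,\tilde p_i, r_i^{-1}\tilde d_i)$ and passing to a further subsequence gives an equivariant diagram
\[
\xymatrix{
(\tilde M_i,\tilde p_i, r_i^{-1}\tilde d_i,\Gamma_i)\ar[rr]^{\quad GH\quad}\ar[d] && (Y_\infty, y_\infty, G_\infty)\ar[d]^{\pi'_\infty}\\
(M_i, p_i, r_i^{-1}d_i)\ar[rr]^{\quad GH\quad} && (\R^k, 0^k),
}
\]
where $Y_\infty$ is a tangent cone of $Y$ at $y_0$ (noncollapsed, $n$-dimensional, a metric cone), $G_\infty\subset\Isom(Y_\infty)$ is the limit group, and $\pi'_\infty:Y_\infty\to \R^k=Y_\infty/G_\infty$ is a submetry onto $\R^k$. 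The point is that an arbitrary tangent cone of $Y$ at $y_0$ arises this way, because $Y$ is noncollapsed so tangent cones of $Y$ at $y_0$ are exactly Gromov--Hausdorff limits of the rescalings, and the $G$-action and the submetry pass to the limit by Lemmas \ref{eqconexist} and \ref{eqconquotient}.

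Now the core step: a noncollapsed Ricci limit space $Y_\infty$ admitting an isometric group action $G_\infty$ with quotient $\R^k$ must itself be isometric to $\R^n$. I would argue as follows. Since $\pi'_\infty$ is a submetry with Euclidean base, pulling back the $k$ coordinate functions on $\R^k$ gives $k$ functions on $Y_\infty$ that are (limits of) harmonic and realize a metric splitting: $Y_\infty$ splits isometrically as $\R^k\times Z$ with $\pi'_\infty$ the projection and $G_\infty$ acting on the $\R^k$ factor by translations (a submetry onto $\R^k$ from a space with the relevant lower Ricci bound forces this — one gets $k$ independent lines in $Y_\infty$ through $y_\infty$, each splitting off an $\R$ by the splitting theorem for limit spaces). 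Here $Z$ is a noncollapsed $(n-k)$-dimensional Ricci limit space which is also a metric cone with vertex (being a tangent cone factor). But $G_\infty$ acts transitively on the fibers of $\pi'_\infty$ — since $\pi'_\infty$ is the quotient map $Y_\infty\to Y_\infty/G_\infty$ — so $G_\infty$ acts transitively on $Z$, i.e. $Z$ is a homogeneous space. A homogeneous noncollapsed Ricci limit space is a smooth Riemannian manifold (no singular points, since the isometry group acts transitively and the singular set is $G_\infty$-invariant of codimension $\ge 2$), hence a homogeneous Riemannian manifold; being also a metric cone with vertex, it is a Euclidean cone over a homogeneous space, and the only such that is a manifold with a cone point is $\R^{n-k}$ itself. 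Therefore $Z=\R^{n-k}$ and $Y_\infty=\R^k\times\R^{n-k}=\R^n$. Since the tangent cone at $y_0$ was arbitrary, this proves the lemma.

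The main obstacle I expect is making rigorous the passage "submetry onto $\R^k$ implies isometric splitting $\R^k\times Z$ with the quotient being the factor projection," i.e. identifying the fibers of $\pi'_\infty$ as the $\R^k$-translation orbits. The clean way is: the $k$ Busemann-type functions pulled back from $\R^k$ are Lipschitz with $|\nabla|=1$ and harmonic in the limit (each coordinate on $\R^k$ is realized by lines, whose lifts to $Y_\infty$ are lines since $\pi'_\infty$ is $1$-Lipschitz and the base distance is attained), so the splitting theorem for noncollapsed Ricci limits (Cheeger--Colding) gives $Y_\infty\cong\R^k\times Z$ isometrically; then one checks the splitting is compatible with $\pi'_\infty$, so $Z\cong Y_\infty/G_\infty$ contradicts nothing and in fact $G_\infty$ acts on $Z$ transitively because $\pi'_\infty$ is the orbit map. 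The homogeneity-plus-cone rigidity at the end is then elementary. All of the convergence bookkeeping (extracting the equivariant diagram under rescaling, continuity of submetries under equivariant GH convergence) is routine given Lemmas \ref{eqconexist}–\ref{eqconquotient} and the volume convergence theorem of \cite{Co2}.
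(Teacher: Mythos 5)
Your strategy is genuinely different from the paper's. You try to split the tangent cone $Y_\infty$ as $\R^k\times Z$ directly from the submetry and then argue that $Z$ is homogeneous under $G_\infty$, hence regular, hence Euclidean. The paper instead passes to the canonical cone decomposition $Y_\infty\cong\R^m\times C(Z)$ (with $C(Z)$ containing no lines), uses that $\Isom(\R^m\times C(Z))=\Isom(\R^m)\times\Isom(C(Z))$ so every element of the limit group fixes the vertex $z^*$, and then derives a contradiction by horizontally lifting a single line: the lift must stay in $\R^m\times\{z^*\}$, yet its endpoint would have to be carried by the group to a point off that slice. No homogeneity, no identification of $Z$, just a one-line lifting contradiction.

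The real gap in your argument is the step you flag yourself: ``one checks the splitting is compatible with $\pi'_\infty$.'' Everything downstream (transitivity of $G_\infty$ on $Z$, homogeneity, regularity of the vertex) hinges on the claim that the Busemann splitting $Y_\infty=\R^k\times Z$ has $\pi'_\infty$ equal to the factor projection and the $G_\infty$-orbit of $y_\infty$ equal to $\{0\}\times Z$. This is not automatic, and you do not prove it. To make it rigorous one has to show: (i) the pullbacks $\pi^\alpha=\langle e_\alpha,\pi'_\infty(\cdot)\rangle$ coincide with the Busemann functions of the horizontal lifts $\gamma_\alpha$ (this follows from $1$-Lipschitzness plus $f(\gamma_\alpha(t))=t$ and the splitting, but must be said); (ii) the $k$ lifted lines are actually independent (a priori they could be parallel); and (iii) the fibers of $\pi'_\infty$ are the $Z$-slices. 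The cleanest way to establish (ii) and (iii) is precisely to first write $Y_\infty=\R^m\times C(Z')$ with $C(Z')$ line-free, observe every line through $y_\infty$ lies in $\R^m\times\{z^*\}$, and use the splitting $\Isom(\R^m\times C(Z'))=\Isom(\R^m)\times\Isom(C(Z'))$ --- i.e. exactly the ingredients the paper uses, at which point the paper's shorter contradiction is already available and the detour through homogeneity buys nothing. A smaller point: ``a homogeneous noncollapsed Ricci limit space is a smooth Riemannian manifold'' is heavier than you need; since $Z$ is a metric cone, it suffices to know its vertex is a regular point (the tangent cone at the vertex of a cone is the cone itself, so a regular vertex forces $Z=\R^{n-k}$), and that follows already from your parenthetical remark that the singular set is $G_\infty$-invariant and thus empty.
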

	\begin{proof}
		Let $\lambda_i\to\infty$ such that $(\lambda_iX,x_0)\myarrow{GH}(\R^k,0^k)$. By Theorem 5.2 in \cite{CC2}, any tangent cone on $Y$ is a metric cone. Hence passing to a subsequence, $(\lambda_iY_i,y_0)\myarrow{GH}(\R^m\times C(Z),(0^m,z^*))$ where $C(Z)$ is the metric cone over $Z$ with diameter $<\pi$. Thus by diagram (\ref{Equivariantdiagram}), passing to a subsequence, we have the following equivariant commutative diagram,
		
		\begin{align*}
		\xymatrix{
			(\lambda_iY,y_0,G) \ar[rr]^{GH}\ar[d]_{\pi_i}&&(\R^m\times C(Z),(0^m,z^*),H) \ar[d]^{\pi_\infty} \\
			(\lambda_iX,x_0)\ar[rr]^{GH}&  & (\R^k,0^k)=(\R^m\times C(Z)/H,0^k),}
		\end{align*}
		where $z^*$ is the cone point of $C(Z)$. Note that $\Isom(C(Z))=\Isom(Z)$ and $\Isom(\R^m\times C(Z))=\Isom(\R^m)\times\Isom(C(Z))$. Hence for each $h\in H$, there is a unique $(h_1,h_2)\in\Isom(\R^m)\times\Isom(C(Z))$, such that for any $(u,z)\in \R^m\times C(Z)$, $h(u,z)=(h_1(u),h_2(z))$. Note that $h_2(z^*)=z^*$ for all $h\in H$.
		
		It suffices to show $C(Z)$ is a point. If not, let $z^*\neq z_0\in C(Z)$. Observe that
		for any $h\in H$, $u\in\R^m$,
		\begin{equation}\label{Conesplitting}
		h(u,z^*)=(h_1(u),h_2(z^*))=(h_1(u),z^*)\neq (0^m,z_0).
		\end{equation}

		Hence, $\pi_\infty(0^m,z_0)\neq \pi_\infty(0^m,z^*)=0^k$.
		Consider the horizontal lifting line $\tilde \gamma$ at $(0^m,z^*)$ of the line $\gamma(t)=t\pi_\infty(0^m,z_0)$, $t\in(-\infty,+\infty)$. Since $C(Z)$ contains no lines, there is a line $u(t)$ in $\R^m$ such that $\tilde \gamma(t)=(u(t),z^*)$. And because $\pi_\infty(\tilde \gamma(1))=\gamma(1)=\pi_\infty(0^m,z_0)$, there is $h\in H$, such that
		$h(u(1),z^*)=h(\tilde \gamma(1))=(0^m,z_0)$, a contradiction to (\ref{Conesplitting}).
		
	\end{proof}

	\noindent\\[4mm]

	\section{Gluing splitting maps to a global bundle map}
	
	Let $M$ and $N$ be as in Theorem \ref{Main}, which, after a suitable scaling, satisfy
	the conditions described in the second paragraph of Section 2, and let $h:M\to N$ be a $\delta$-GHA.
	Fix an $1$-net $\{p_\lambda|\lambda=1,2,..,\Lambda\}$ on $M$ and an orthonormal coordinate on each $T_{h(p_\lambda)}N$. By Theorem \ref{splittingthm},
	we start with a family of $(\Psi(\delta|n),k)$-splitting maps, $u_\lambda: B_4(p_\lambda)\to T_{h(p_\lambda)}N$, approximating $\exp_{h(p_\lambda)}^{-1}\circ h|_{B_4(p_\lambda)}$. We glue local maps, $f_\lambda\triangleq\exp_{h(p_\lambda)}\circ u_\lambda$, via the center of mass techniques, to form a (global) map $f$. In the rest of this section,  our main effort is to
	verify the non-degeneracy of $f$.
	
	Let $\phi:[0,\infty)\to[0,1]$ be a smooth function with $\phi|_{[0,1.1]}\equiv1$, support $\supp\phi\subset[0,2]$ and $|\phi'|\in[0,10]$. Let $r_\lambda$ be the distance function to the $0^k$ on $T_{h(p_\lambda)}N$. Then $\phi_\lambda\triangleq\phi\circ r_\lambda\circ u_\lambda$ is a smooth function on $M$ with $\supp{\phi_\lambda}\subset B_{2.1}(p_\lambda)$. Define an energy function $E:M\times N\to\R$ by
	\begin{equation}\label{Energy}
	E(x,y)=\frac 12\sum_{\lambda} \phi_\lambda(x) d(f_\lambda(x),y)^2.
	\end{equation}
	
	Fixing $x\in M$, we may assume that $E(x,\cdot)$ is strictly convex on $B_1(h(x))$, and
	achieves the global minimum at a unique
	point, denoted by $cm(x)$, which is $\Psi(\delta|n)$-close to $h(x)$. Define a map, $f: M\to N$, $f(x)=cm(x)$. Note that
	$D(x)\triangleq\sum_\lambda\phi_\lambda(x)$ is not normalized to $1$, hence $\{\phi_\lambda\}$ is not a partition of unity. We purposely use $\{\phi_\lambda\}$ for a simple computation (indeed $cm(x)$ does not change when replacing $\phi_\lambda(x)$ by $\bar\phi_\lambda(x)\triangleq\phi_\lambda(x)D(x)^{-1}$ in definition (\ref{Energy})).
	
	\begin{lemma}\label{temp} Let the assumptions be as in Theorem \ref{Main}, and let
		$f: M\to N$ be defined as above. Then $\myd f$ is non-degenerate, and thus
		$f$ is a fiber bundle map.
	\end{lemma}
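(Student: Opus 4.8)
The plan is to show that $\myd f(p)$ is non-degenerate at every $p\in M$; since $M$ is compact, $f$ is then a proper submersion, hence a smooth fiber bundle map by Ehresmann's theorem. Fix $p$, put $q_0=f(p)$, and pass to the normal chart $\Phi\triangleq\exp_{q_0}^{-1}$ at $q_0$, which is defined on $B_{10}(q_0)$ and $C^1$-close there to a linear isometry since $|\sec_N|\le\delta$ and $\inj_N\ge\delta^{-1}$; thus it suffices to show $\myd\tilde f(p)$ is non-degenerate, where $\tilde f\triangleq\Phi\circ f$ and $\tilde f_\lambda\triangleq\Phi\circ f_\lambda$. The value $f(x)=cm(x)$ is characterized as the solution $y\in B_1(h(x))$ of the Karcher equation $\sum_\lambda\phi_\lambda(x)\exp_y^{-1}\big(f_\lambda(x)\big)=0$; differentiating it at $x=p$ produces the Jacobian formula (this is Lemma \ref{Jacobian})
\begin{equation*}
\myd\tilde f(p)=\sum_\lambda C_{\lambda,p}\,\myd\tilde f_\lambda(p),\qquad |C_{\lambda,p}|\le C(n),\qquad \big|\sum_\lambda C_{\lambda,p}-1\big|\le\Psi(\delta|n),
\end{equation*}
with at most $\Lambda(n)$ indices $\lambda$ contributing. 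The bounds use that $D(x)\triangleq\sum_\mu\phi_\mu(x)\ge1$ (since $p$ lies in a ball $B_{1.1}(p_{\lambda_0})$ where $\phi_{\lambda_0}\equiv1$), that only $\le\Lambda(n)$ of the $\phi_\lambda$ are nonzero near $p$ (Bishop--Gromov and the $1$-net property), and that $N$ is almost flat on this scale, so that the Hessian of $E(x,\cdot)$ is $\Psi(\delta|n)$-close to $D(x)\cdot\mathrm{id}$, the corrections from $\myd_y\exp_y^{-1}$ are $\Psi(\delta|n)$, and the $\myd\phi_\lambda(p)$-term is $\Psi(\delta|n)$ because $|\exp_{q_0}^{-1}(f_\lambda(p))|=d(q_0,f_\lambda(p))\le\Psi(\delta|n)$.

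Second, I would identify the dominant term. Each $\tilde f_\lambda=\big(\Phi\circ\exp_{h(p_\lambda)}\big)\circ u_\lambda$ is the $(\Psi(\delta|n),k)$-splitting map $u_\lambda$ composed with the transition map $T_\lambda\triangleq\Phi\circ\exp_{h(p_\lambda)}$ between two normal charts; as $|\sec_N|\le\delta$, $T_\lambda$ is $C^1$-close near $u_\lambda(p)$ to $\xi\mapsto L_\lambda\xi+c_\lambda$ for a linear isometry $L_\lambda$ of $\R^k$, so $\myd\tilde f_\lambda(p)=L_\lambda\,\myd u_\lambda(p)+\Psi(\delta|n)$ and $w_\lambda\triangleq L_\lambda\circ u_\lambda$ is harmonic. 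Every $u_\lambda$ approximates $\exp_{h(p_\lambda)}^{-1}\circ h$, so every $\tilde f_\lambda$ approximates $\Phi\circ h$ in $C^0$ on a fixed ball about $p$; hence the harmonic maps $w_\lambda$ differ there pairwise from constants by $\Psi(\delta|n)$, and the interior gradient estimate for harmonic functions on manifolds with $\Ric$ bounded below gives $\myd w_\lambda(p)=\myd w_{\lambda_0}(p)+\Psi(\delta|n)$. Substituting $\myd\tilde f_\lambda(p)=\myd w_{\lambda_0}(p)+\Psi(\delta|n)$ into the Jacobian formula and using $|C_{\lambda,p}|\le C(n)$, the bound on $\sum_\lambda C_{\lambda,p}$, and that $\le\Lambda(n)$ terms occur, I obtain
\begin{equation*}
\myd\tilde f(p)=L_{\lambda_0}\,\myd u_{\lambda_0}(p)+E,\qquad \|E\|\le\Psi(\delta|n),
\end{equation*}
where $\lambda_0$ indexes a net point nearest $p$, so $p\in B_2(p_{\lambda_0})$ and $u_{\lambda_0}:B_4(p_{\lambda_0})\to\R^k$ is a $(\Psi(\delta|n),k)$-splitting map.

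Third, I would conclude with Lemma \ref{NonDegenaracyofSplittingMap}. Its proof is a contradiction argument that, after a blow-up and passage to the local Riemannian universal cover, reduces to the Canonical Reifenberg Theorem \ref{CanonicalReifenbergThm}; the same argument in fact yields a uniform constant $c(n,v)>0$ with $\sigma_{\min}(\myd u)\ge c(n,v)$ on $B_2(q)$ for every $(\Psi(\delta|n),k)$-splitting map $u:B_4(q)\to\R^k$ on a manifold satisfying the hypotheses of Theorem \ref{Main}. Applying this to $u_{\lambda_0}$ at $p$, and using that $L_{\lambda_0}$ is a linear isometry, we get $\sigma_{\min}(\myd\tilde f(p))\ge c(n,v)-\Psi(\delta|n)>0$ once $\delta\le\delta(n,i_0,v)$; hence $\myd\tilde f(p)$, and therefore $\myd f(p)$, is non-degenerate, and since $p$ was arbitrary this proves the assertion.

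The step I expect to be the main obstacle is exactly this quantitative non-degeneracy of splitting maps, hand in hand with the upgrade from $C^0$- to $C^1$-closeness of the local models $w_\lambda$: the manifold $M$ carries no two-sided curvature bound, so the only regularity available is the ellipticity of the harmonic splitting maps, and one must simultaneously make sure the differentiation in Lemma \ref{Jacobian} does not leak a dependence on $v$ into its error terms. A variant of the argument that avoids stating the quantitative strengthening of Lemma \ref{NonDegenaracyofSplittingMap} explicitly is to prove the non-degeneracy of $\myd f$ by contradiction, running a sequence $(M_i,N_i,f_i)$ with $\myd f_i$ singular at $p_i$ through the first two steps and then applying Lemma \ref{NonDegenaracyofSplittingMap} qualitatively to the limit of the splitting maps $u_{\lambda_0(i)}$, whose differentials converge by elliptic regularity.
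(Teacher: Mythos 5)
Your plan is correct in outline but deviates from the paper's argument in a way that changes what you must demand of Lemma~\ref{NonDegenaracyofSplittingMap}. The paper does not reduce $\myd f(p)$ approximately to a single splitting map $\myd u_{\lambda_0}(p)$; instead, it uses the exact Jacobian identity $\myd f^\alpha(p)=\sum_\lambda C^\alpha_{\lambda,\beta}\myd v^\beta_\lambda(p)$ of Lemma~\ref{Jacobian} to define a new map $v(x)\triangleq\sum_\lambda C^\alpha_{\lambda,\beta}v^\beta_\lambda(x)$, a constant--coefficient linear combination of the harmonic maps $v_\lambda$ which is therefore itself harmonic, satisfies $\myd f(p)=\myd v(p)$ \emph{exactly}, and is $C^0$-close to each $v_\lambda$ by (\ref{Jacobian}.2)--(\ref{Jacobian}.3); the flexibility of splitting maps then makes $v$ a $(\Psi(\delta|n),k)$-splitting map on $B_{0.5}(p)$, and the qualitative Lemma~\ref{NonDegenaracyofSplittingMap} applies to $v$ directly with no error term. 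Your route instead replaces all $\myd\tilde f_\lambda(p)$ by $\myd w_{\lambda_0}(p)$ using the Cheng--Yau gradient estimate and absorbs the difference into an error $E$, so you must then know a \emph{quantitative} lower bound $\sigma_{\min}(\myd u_{\lambda_0})\ge c(n,v)$; Lemma~\ref{NonDegenaracyofSplittingMap} as stated only asserts non-degeneracy and does not immediately supply a uniform $c(n,v)$. Your claim that this bound ``can be extracted from the contradiction argument'' is correct but not automatic: one has to invoke the full quantitative conclusion of the Canonical Reifenberg Theorem of \cite{CJN} (bi-H\"older with definite constants, hence a pointwise lower bound on the singular values of a $(\Psi,n)$-splitting map), and observe that the rescaling and lift in the proof of Lemma~\ref{NonDegenaracyofSplittingMap} preserve $\sigma_{\min}$. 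By contrast, your proposed ``variant contradiction argument'' in the last paragraph does not close the gap as described: a sequence with $\sigma_{\min}(\myd u_{\lambda_0(i)}(p_i))\to 0$ does not produce a splitting map with a degenerate differential to which the qualitative Lemma~\ref{NonDegenaracyofSplittingMap} could be applied, and ``the limit of the splitting maps'' lives on a Ricci-limit space rather than a manifold, so one cannot differentiate there without first replicating the blow-up-and-lift structure of the paper's Lemma~\ref{NonDegenaracyofSplittingMap}. In short: your approach is a genuine and workable variation, trading the paper's elegant exact identity $\myd f(p)=\myd v(p)$ for an approximate one; the price is that you need to promote Lemma~\ref{NonDegenaracyofSplittingMap} to its quantitative form, and the paragraph suggesting you can avoid doing so should be dropped or replaced by the correct quantitative statement.
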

	
	By Lemma \ref{temp}, the proof of Theorem \ref{Main} is finished. In the proof of Lemma \ref{temp}, we need the following technical lemma.
	
	\begin{lemma}\label{Jacobian}
		Given $p\in M$, and
		a normal coordinate $y^1,...,y^k$ on $T_{f(p)}N$, for those $\lambda$ with $d(p,p_\lambda)<2.5$, there exist constants $C^\alpha_{\lambda,\beta}$, $\alpha,\beta=1,...,k$, and $(\Psi(\delta|n),k)$-splitting maps, $v_\lambda:B_{1}(p)\to T_{f(p)}N$, such that,
		
		(\ref{Jacobian}.1) $\myd f^\alpha(p)=\sum_{\lambda}C^{\alpha}_{\lambda,\beta}\myd v_\lambda^\beta(p)$.
		
		(\ref{Jacobian}.2) $|v_\lambda-\exp_{f(p)}^{-1}\circ h|\le\Psi(\delta|n)$.
		
		(\ref{Jacobian}.3) $|C^\alpha_{\lambda,\beta}-\delta^\alpha_\beta\phi_\lambda(p)D(p)^{-1}|\le\Psi(\delta|n)$.
		
		
	\end{lemma}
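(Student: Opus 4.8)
The plan is to read $\myd f(p)$ off the stationarity equation for the center of mass and then rewrite it \emph{exactly} in terms of the local maps $u_\lambda$. Write $\Psi$ for $\Psi(\delta|n)$ (the number of $\lambda$ with $d(p,p_\lambda)<2.5$ is bounded by a dimensional constant $\Lambda(n)$, so any $\Lambda(n)\,\Psi$ is again a $\Psi$), put $q:=f(p)$, use the normal chart $\exp_q^{-1}$ at $q$, and set $w_\lambda:=\exp_q^{-1}\circ f_\lambda$. For $\lambda$ with $d(p,p_\lambda)<2.5$: $B_1(p)\subset B_4(p_\lambda)$, the domain of $u_\lambda$; the point $f_\lambda(p)=\exp_{h(p_\lambda)}(u_\lambda(p))$ is $\Psi$-close to $h(p)$ hence to $q$, so $|w_\lambda(p)|=d(f_\lambda(p),q)\le\Psi$; and $\Theta_\lambda:=\exp_q^{-1}\circ\exp_{h(p_\lambda)}$ is defined on a ball of radius $5$ in $T_{h(p_\lambda)}N$ containing $u_\lambda(B_1(p))$. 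Since $\nabla_yE(x,y)=-\sum_\lambda\phi_\lambda(x)\exp_y^{-1}f_\lambda(x)$, stationarity of $f(x)$ gives the identity $\sum_\lambda\phi_\lambda(x)\exp_{f(x)}^{-1}f_\lambda(x)=0$ near $p$; I would differentiate it at $x=p$ and use $D_y(\exp_y^{-1}z)=-\Hess_y(\tfrac12 d(z,\cdot)^2)$ to obtain
\begin{equation*}
A\circ\myd f(p)=\myd\Big(\textstyle\sum_\lambda\phi_\lambda w_\lambda\Big)(p),\qquad A:=\sum_\lambda\phi_\lambda(p)\,\Hess_q\big(\tfrac12 d(f_\lambda(p),\cdot)^2\big).
\end{equation*}
Each Hessian here is $\mathrm{Id}+O(\Psi)$ (since $d(f_\lambda(p),q)\le\Psi$ and $|\sec_N|\le\delta$), and $D(p):=\sum_\lambda\phi_\lambda(p)\ge\phi_{\lambda_0}(p)=1$ for the nearest net point $p_{\lambda_0}$, so $A=D(p)\,\mathrm{Id}+O(\Psi)$ is invertible with $A^{-1}=D(p)^{-1}\mathrm{Id}+O(\Psi)$ and $\myd f(p)=A^{-1}\myd(\sum_\lambda\phi_\lambda w_\lambda)(p)$.

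The decisive observation is that $\phi_\lambda$ and $w_\lambda$ both factor through $u_\lambda$: $\phi_\lambda=(\phi\circ|\cdot|)\circ u_\lambda$ and $w_\lambda=\Theta_\lambda\circ u_\lambda$, hence $\phi_\lambda w_\lambda=\Upsilon_\lambda\circ u_\lambda$ with $\Upsilon_\lambda(z):=\phi(|z|)\,\Theta_\lambda(z)$, and by the chain rule $\myd(\phi_\lambda w_\lambda)(p)=B_\lambda\circ\myd u_\lambda(p)$ with the honest $k\times k$ matrix
\begin{equation*}
B_\lambda:=(\myd\Upsilon_\lambda)_{u_\lambda(p)}=\phi_\lambda(p)\,(\myd\Theta_\lambda)_{u_\lambda(p)}+\phi'(|u_\lambda(p)|)\,w_\lambda(p)\otimes\tfrac{u_\lambda(p)}{|u_\lambda(p)|}
\end{equation*}
(if $u_\lambda(p)=0$ the second term is absent, since then $\phi_\lambda\equiv1$ near $p$; if $\phi_\lambda(p)=0$ then also $\phi'(|u_\lambda(p)|)=0$, so $B_\lambda=0$). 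Because $|\sec_N|\le\delta$ and $\inj_N\ge\delta^{-1}$, on balls of fixed radius $\Theta_\lambda$ is $C^1$-$\Psi$-close to the affine isometry $L_\lambda(z):=O_\lambda z+b_\lambda$, with $O_\lambda\colon T_{h(p_\lambda)}N\to T_qN$ the (orthogonal) parallel transport along the minimal geodesic from $h(p_\lambda)$ to $q$ and $b_\lambda:=\exp_q^{-1}(h(p_\lambda))$; together with $|w_\lambda(p)|\le\Psi$ and $|\phi'|\le10$ this yields $B_\lambda=\phi_\lambda(p)\,O_\lambda+O(\Psi)$. Now set $v_\lambda:=O_\lambda u_\lambda+b_\lambda$ on $B_1(p)$; then $\myd u_\lambda(p)=O_\lambda^{T}\myd v_\lambda(p)$ \emph{exactly}, whence
\begin{equation*}
\myd f^\alpha(p)=\sum_\lambda\big(A^{-1}B_\lambda O_\lambda^{T}\big)^\alpha_\beta\,\myd v_\lambda^\beta(p),
\end{equation*}
which is (\ref{Jacobian}.1) with $C^\alpha_{\lambda,\beta}:=(A^{-1}B_\lambda O_\lambda^{T})^\alpha_\beta$ (for $\lambda$ with $\phi_\lambda(p)=0$ this equals $0$, so such $\lambda$ are harmless and there one may take $v_\lambda$ to be any $(\Psi,k)$-splitting map on $B_1(p)$ that is $\Psi$-close to $\exp_q^{-1}\circ h$).

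It then remains to verify (\ref{Jacobian}.2) and (\ref{Jacobian}.3). For (\ref{Jacobian}.3), $C_\lambda=(D(p)^{-1}\mathrm{Id}+O(\Psi))(\phi_\lambda(p)O_\lambda+O(\Psi))\,O_\lambda^{T}=D(p)^{-1}\phi_\lambda(p)\,\mathrm{Id}+O(\Psi)$, using $O_\lambda O_\lambda^{T}=\mathrm{Id}$ and $0\le\phi_\lambda(p),D(p)^{-1}\le1$. For (\ref{Jacobian}.2), from $\exp_q^{-1}\circ h=\Theta_\lambda\circ(\exp_{h(p_\lambda)}^{-1}\circ h)$, $v_\lambda=L_\lambda\circ u_\lambda$, the isometry property of $O_\lambda$, $|u_\lambda-\exp_{h(p_\lambda)}^{-1}\circ h|\le\Psi$ (the way $u_\lambda$ was chosen) and $|L_\lambda-\Theta_\lambda|_{C^0}\le\Psi$ on the relevant ball, the triangle inequality gives $|v_\lambda-\exp_q^{-1}\circ h|\le\Psi$. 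Finally $v_\lambda$ is a $(\Psi,k)$-splitting map on $B_1(p)$: it is harmonic (an orthogonal affine image of the harmonic $u_\lambda$) and, since $B_8(p)$ is $\Psi$-close to $B_8(0^k)$, it is $C^0$-$\Psi$-close to a $(\Psi,k)$-splitting map on $B_1(p)$ furnished by Theorem \ref{splittingthm}, so the flexibility of splitting maps applies.

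I expect the only genuine difficulty to be the \emph{exactness} in (\ref{Jacobian}.1): one cannot afford to replace $w_\lambda$ by the harmonic $v_\lambda$ even up to a $C^1$-small error, so the identity must be assembled by factoring $\phi_\lambda w_\lambda$ literally through $u_\lambda$ and then changing the target gauge by the exact orthogonal map $O_\lambda$; all the estimates on $A$, $B_\lambda$ and hence on $C^\alpha_{\lambda,\beta}$ enter only afterwards. A secondary, routine point is the radius bookkeeping that lets $v_\lambda$ count as a splitting map on $B_1(p)$ rather than merely on a concentric subball of $B_4(p_\lambda)$, together with the $\Psi$-slop near the cutoff boundary $\phi_\lambda(p)=0$, which is handled as above by observing $B_\lambda=0=C_\lambda$ there.
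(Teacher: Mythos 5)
Your proposal is correct and follows essentially the same route as the paper's proof: differentiate the stationarity (first-order) condition for the center of mass (equivalently, apply the implicit function theorem to $\partial_y E(x,f(x))=0$), observe that the local energy densities factor exactly through $u_\lambda$ via the chain rule, and absorb the tangent-space change of gauge into an affine isometry $T_{h(p_\lambda)}N\to T_{f(p)}N$ approximating $\exp_{f(p)}^{-1}\circ\exp_{h(p_\lambda)}$, which produces the $v_\lambda$. The only cosmetic difference is that you build the orthogonal part of this affine isometry via parallel transport, while the paper produces its $\omega_\lambda$ by Gram--Schmidt applied to $\mu=\exp_{h(p_\lambda)}^{-1}\circ\exp_{f(p)}$; both are $\Psi$-close, and the rest of the bookkeeping and estimates coincide.
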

	We adopt Einstein summation convention in (\ref{Jacobian}.1) and below.
	
	\begin{proof} [Proof of Lemma \ref{temp} by assuming Lemma \ref{Jacobian}]\quad
		
		For $p\in M$, by Lemma \ref{Jacobian} we can define the following functions on $B_{1}(p)$:
		$$v^\alpha(x)\triangleq\sum_\lambda C^\alpha_{\lambda,\beta} v_\lambda^\beta(x),\quad\alpha=1,...,k.$$
		By conclusions (\ref{Jacobian}.2) and (\ref{Jacobian}.3), $\left|v^{\alpha}(x)-v_{\lambda}^\alpha(x)\right|\le\Psi(\delta|n)$. So $$v=(v^1,...,v^k)|_{B_{0.5}(p)}:B_{0.5}(p)\to\R^k$$ is a ($\Psi(\delta|n),k)$-splitting map. By (\ref{Jacobian}.1), $\myd f^\alpha(p)=\myd v^\alpha(p)$. Thus Lemma \ref{NonDegenaracyofSplittingMap} implies the non-degeneracy of $\myd f(p)$.
	\end{proof}
	
	\begin{proof}[Proof of Lemma \ref{Jacobian}]\quad
		
		For each $\lambda$ with $d(p,p_\lambda)<2.5$, there exists an isometric map $\omega_\lambda:T_{f(p)}N\to T_{h(p_\lambda)}N$ (respect to the standard Euclidean metrics on $T_{f(p)}N$ and $T_{h(p_\lambda)}N$) such that
		\begin{equation}\label{C^1closedness}
		|\omega_\lambda-\exp^{-1}_{h(p_\lambda)}\circ\exp_{f(p)}|_{C^1(B_{10}(0^k))}\le\Psi(\delta|n).
		\end{equation}
		Such $\omega_\lambda$ can be constructed as follows. Let $e_\alpha\in T_{f(p)}N$ such that $y^\beta(e_\alpha)=\delta_{\alpha}^\beta$ and  $\mu\triangleq\exp^{-1}_{h(p_\lambda)}\circ\exp_{f(p)}$, then $\vec v_\alpha\triangleq\mu(e_\alpha)-\mu(0^k)$, $\alpha=1,...,k$ is a $\Psi(\delta|n)$-almost orthonormal basis in $T_{h(p_\lambda)}N$. Let $\vec u_\alpha$ be the orthonormal basis obtained by applying the Schmidt orthonormal procedure on $\vec v_\alpha$. Then define $\omega_\lambda(y^1,..,y^k)=\mu(0^k)+\sum_{\alpha}y^\alpha\vec u_\alpha$.

		Now set $\eta_\lambda=\exp_{f(p)}^{-1}\circ\exp_{h(p_\lambda)}\circ\omega_\lambda$ and $v_\lambda=\omega_\lambda^{-1}\circ u_\lambda|_{B_{1}(p)}$. By inequality (\ref{C^1closedness}), we have $|\eta_\lambda-\id_{T_{f(p)}N}|_{C^1(B_{10}(0^k))}\le\Psi(\delta|n)$ and $v_\lambda:B_{1}(p)\to\R^k=T_{f(p)}N$ is a $\Psi(\delta|n)$-splitting map with $|v_\lambda-\exp_{f(p)}^{-1}\circ h|\le\Psi(\delta|n)$ which shows (\ref{Jacobian}.2).		
		
		By definition of $f$, for $x=(x^1,...,x^n)$ around $p$, $y=(f^1(x),...,f^k(x))$ is the solution to the equations $$\frac{\partial E}{\partial y^\alpha}(x,y)=0,\quad\alpha=1,..,k.$$ By implicit function theorem, $$\frac{\partial f^\alpha}{\partial x^a}(p)=-K^{\alpha,\beta}\frac{\partial^2 E}{\partial x^a\partial y^\beta}\bigg|_{\substack{x=p\\y=f(p)}},$$ where $K^{\alpha,\beta}$ is the inverse matrix of $\frac{\partial^2 E}{\partial y^\alpha\partial y^\beta}\bigg|_{\substack{x=p\\y=f(p)}}$. Note that by Hessian estimate, $$\left|\frac{\partial^2 d^2_{f_\lambda(x)}}{\partial y^\alpha\partial y^\beta}\bigg|_{\substack{x=p\\y=f(p)}}-2\delta_{\alpha,\beta}\right|\le\Psi(\delta|n),$$ which implies $$\left|\frac{\partial^2 E}{\partial y^\alpha\partial y^\beta}\bigg|_{\substack{x=p\\y=f(p)}}-\delta_{\alpha,\beta}D(p)\right|\le\Psi(\delta|n).$$ Note that the cardinality of $\{\lambda|\phi_\lambda(p)\neq0\}$ is at most $\Lambda(n)$, thus
		\begin{equation}\label{Kestimate}
		\left|K^{\alpha,\beta}-\delta^{\alpha,\beta}D(p)^{-1}\right|\le\Psi(\delta|n).
		\end{equation}

		And we compute,
		
		\begin{equation}\label{Hessian}
		\frac{\partial^2 E}{\partial x^a\partial y^\beta}\bigg|_{\substack{x=p\\y=f(p)}}=\frac 12\sum_\lambda\left(\phi_\lambda(x)\frac{\partial^2d^2(f_\lambda(x),y)}{\partial x^a\partial y^\beta}+\frac{\partial\phi_\lambda}{\partial x^a}\frac{\partial d^2(f_\lambda(x),y)}{\partial y^\beta}\right)\bigg|_{\substack{x=p\\y=f(p)}}.
		\end{equation}

		Observe that to compute the right hand side of (\ref{Hessian}), it suffices to compute those terms with $d(p,p_\lambda)<2.5$. Now we plug $\phi_\lambda=\phi\circ r_\lambda\circ u_\lambda$, $f^\alpha_\lambda=(\eta_\lambda\circ v_\lambda)^\alpha$ and $u_\lambda=\omega_\lambda\circ v_\lambda$ to (\ref{Hessian}) to calculate,
		
		\begin{align*}
		&\frac{\partial^2 E}{\partial x^a\partial y^\beta}\bigg|_{\substack{x=p\\y=f(p)}}
		\\=&\frac 12\sum_\lambda\left(\phi_\lambda(x)\frac{\partial^2d^2(y_2,y_1)}{\partial y_2^\gamma\partial y_1^\beta}\frac{\partial f_\lambda^{\gamma}}{\partial x^a}+\frac{\partial(\phi\circ r_\lambda)}{\partial u_\lambda^\gamma}\frac{\partial u_\lambda^{\gamma}}{\partial x^a}\frac{\partial d^2(f_\lambda(x),y)}{\partial y^\beta}\right)\bigg|_{\substack{x=p\\y=y_1=f(p)\\y_2=f_\lambda(p)}}
		\\=&\frac 12\sum_\lambda\left(\phi_\lambda(x)\frac{\partial^2d^2(y_2,y_1)}{\partial y_2^\gamma\partial y_1^\beta}\frac{\partial (\eta_\lambda\circ v_\lambda)^{\gamma}}{\partial x^a}+\frac{\partial(\phi\circ r_\lambda)}{\partial u_\lambda^\gamma}\frac{\partial (\omega_\lambda\circ v_\lambda)^{\gamma}}{\partial x^a}\frac{\partial d^2(f_\lambda(x),y)}{\partial y^\beta}\right)\bigg|_{\substack{x=p\\y=y_1=f(p)\\y_2=f_\lambda(p)}}
		\\=&\frac 12\sum_\lambda\left(\phi_\lambda(x)\frac{\partial^2d^2(y_2,y_1)}{\partial y_2^\gamma\partial y_1^\beta}\frac{\partial\eta_\lambda^\gamma}{\partial y^\zeta}\circ v_\lambda+\frac{\partial(\phi\circ r_\lambda)}{\partial u_\lambda^\gamma}\frac{\partial d^2(f_\lambda(x),y)}{\partial y^\beta}\frac{\partial\omega_\lambda^\gamma}{\partial y^\zeta}\circ v_\lambda\right)\frac{\partial v_\lambda^{\zeta}}{\partial x^a}\bigg|_{\substack{x=p\\y=y_1=f(p)\\y_2=f_\lambda(p)}}.
		\end{align*}
		
		Put $$A_{\lambda,\beta,\zeta}=\frac 12\left(\phi_\lambda(x)\frac{\partial^2d^2(y_2,y_1)}{\partial y_2^\gamma\partial y_1^\beta}\frac{\partial\eta_\lambda^\gamma}{\partial y^\zeta}\circ v_\lambda+\frac{\partial(\phi\circ r_\lambda)}{\partial u_\lambda^\gamma}\frac{\partial d^2(f_\lambda(x),y)}{\partial y^\beta}\frac{\partial\omega_\lambda^\gamma}{\partial y^\zeta}\circ v_\lambda\right)\bigg|_{\substack{x=p\\y=y_1=f(p)\\y_2=f_\lambda(p)}}.$$
		
		Since $|\sec_N|\le\delta$ and $\inj_N\ge\delta^{-1}$, we have estimates,

		\begin{equation}\label{Estimate}
		\left|\frac{\partial^2d^2(y_2,y_1)}{\partial y_2^\gamma\partial y_1^\beta}\bigg|_{\substack{y_1=f(p)\\y_2=f_\lambda(p)}}+2\delta_{\gamma,\beta}\right|\le\Psi(\delta|n),\quad\left|\frac{\partial d^2(f_\lambda(x),y)}{\partial y^\beta}\bigg|_{\substack{x=p\\y=f(p)}}\right|\le\Psi(\delta|n).
		\end{equation}

		By estimates (\ref{C^1closedness}) and (\ref{Estimate}), we have,
		
		\begin{equation}\label{Aestimate}
		\left|A_{\lambda,\beta,\zeta}+\delta_{\beta,\zeta}\phi_\lambda(p)\right|\le\Psi(\delta|n).
		\end{equation}
		
		Put $C^\alpha_{\lambda,\zeta}=-K^{\alpha,\beta}A_{\lambda,\beta,\zeta}$. Hence $\frac{\partial f^\alpha}{\partial x^a}(p)=\sum_{\lambda}C^{\alpha}_{\lambda,\beta}\frac{\partial v_\lambda^\beta}{\partial x^a}(p)$ which gives (\ref{Jacobian}.1). Combining estimates (\ref{Kestimate}) and (\ref{Aestimate}),
		$$|C^\alpha_{\lambda,\zeta}-\delta^\alpha_{\zeta}\phi_\lambda(p)D(p)^{-1}|\le\Psi(\delta|n)$$ which shows (\ref{Jacobian}.3).
		
	\end{proof}

	\noindent\\[4mm]
	\section{The stability for compact group actions and applications}
	
	\subsection{Proof of Theorem \ref{GroupStatbility}}\quad
	
	As outline of the proof of Theorem \ref{GroupStatbility} in the introduction, we will approximate $\id_M$ by a diffeomorphism $f: (M,g_1)\to (M,g_0)$, constructed in the proof of Theorem \ref{Main} for $N=(M,g_0)$, and thus $d_{g_0}(f,\id_M)\le\Psi(\delta|n)$.  Then we shall apply the standard center of mass construction in averaging $f$ over $G$ to obtain a
	$G$-equivariant map, $F: (M,g_1)\to (M,g_0)$. The verification for non-degeneracy of $\myd F$
	is similar to that of $\myd f$ in the proof of Lemma \ref{Jacobian}.
	
	Up to a scaling, we may assume $M$ is a compact $n$-manifold with two metrics $g_0$ and $g_1$ such that $|\sec_{g_0}|\le\delta$, $\inj_{g_0}\ge\delta^{-1}$, $\Ric_{g_1}\ge -(n-1)\delta$.
	
	Recall that $f:(M,g_1)\to(M,g_0)$ is obtained by gluing splitting maps $\{u_\lambda\}$ via cut-off functions $\{\phi_\lambda\}$ associated with a fixed $1$-net $\{p_\lambda\}$ (see section 3).
	
	Let $\iota_i:G\hookrightarrow\Isom(M,g_i)$, $i=0,1$, be the two isometric actions. Put $f_g\triangleq\iota_0(g^{-1})\circ f\circ\iota_1(g)$. Fixing a bi-invariant probability measure on $G$, define an energy function $\bar E:M\times M\to\R$ by
	$$\bar E(x,y)=\frac 12\int_{G}d_{g_0}(f_g(x),y)^2\myd{g}.$$
	
	For each fixed $x\in (M,g_1)$, there exists a unique $cm(x)$ at which $\bar E(x,\cdot)$ achieves the minimum over $M$. Define a map $F:(M,g_1)\to(M,g_0)$ by $F(x)=cm(x)$. It's not hard to verify that for each $g\in G$, we have $F\circ\iota_1(g)=\iota_0(g)\circ F$. It remains to show $\myd F$ is non-degenerate.

	Similar to Lemma \ref{Jacobian}, we need the following technical lemma.

	\begin{lemma}\label{Jacobian0}
		Given $p\in(M,g_1)$, and a normal coordinate $y^1,...,y^n$ on $T_{F(p)}(M,g_0)$, putting $G_{\lambda}(r)\triangleq\{g\in G|d_{g_1}(p_\lambda,\iota_1(g)p)<r\}$, for each $\lambda$, there exist smooth functions $C^\alpha_{\lambda,\beta}:G\to\R$, $\alpha,\beta=1,...,n,$ and smooth $v_{\lambda}:G_{\lambda}(2.5)\times B_{1}(p,g_1)\to T_{F(p)}(M,g_0)$, such that,
		
		(\ref{Jacobian0}.1) $\myd f_g^\alpha(p)=\sum_{\lambda}C^{\alpha}_{\lambda,\beta}(g)\myd v_{\lambda,g}^\beta(p)$, where $v_{\lambda,g}\triangleq v_{\lambda}(g,\cdot)$.
		
		(\ref{Jacobian0}.2) If $g\notin G_{\lambda}(2.3)$, then $C^\alpha_{\lambda,\beta}(g)=0$.
		
		(\ref{Jacobian0}.3) For each fixed $g\in G_{\lambda}(2.5)$, $v_{\lambda,g}$ is a $(\Psi(\delta|n),n)$-splitting map.

		(\ref{Jacobian0}.4) $|v_{\lambda,g}-\exp_{F(p)}^{-1}|\le\Psi(\delta|n)$, where $\exp_{F(p)}^{-1}$ is respect to $g_0$.
		
		(\ref{Jacobian0}.5) $|C^\alpha_{\lambda,\beta}(g)-\delta^\alpha_\beta\phi_{\lambda,g}(p)D_g^{-1}(p)|\le\Psi(\delta|n)$, where $\phi_{\lambda,g}\triangleq\phi_\lambda\circ\iota_1(g)$ and $D_g\triangleq\sum_\lambda\phi_{\lambda,g}$.

	\end{lemma}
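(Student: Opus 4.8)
The plan is to observe that for each fixed $g\in G$ the map $f_g$ is itself a glued map of exactly the type constructed in Section~3, so that Lemma~\ref{Jacobian} applies to $f_g$ at $p$ almost verbatim, and then to record the dependence on the parameter $g$. To this end note first that, since $\iota_1(g)$ is a $g_1$-isometry and $\iota_0(g^{-1})$ is a $g_0$-isometry, the point $f_g(x)=\iota_0(g^{-1})f(\iota_1(g)x)$ minimizes $y\mapsto\frac12\sum_\lambda\phi_{\lambda,g}(x)\,d_{g_0}(f_{\lambda,g}(x),y)^2$, where $\phi_{\lambda,g}=\phi_\lambda\circ\iota_1(g)$ and $f_{\lambda,g}=\iota_0(g^{-1})\circ f_\lambda\circ\iota_1(g)=\exp_{q_{\lambda,g}}\circ w_{\lambda,g}$ with $q_{\lambda,g}=\iota_0(g^{-1})h(p_\lambda)$ and $w_{\lambda,g}=\myd\iota_0(g^{-1})\circ u_\lambda\circ\iota_1(g)$. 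Because $\iota_1(g)$ preserves $g_1$ and $\myd\iota_0(g^{-1})$ is a linear isometry of the flat target, $w_{\lambda,g}$ is again a $(\Psi(\delta|n),n)$-splitting map, now on $B_4(\iota_1(g)^{-1}p_\lambda)$, approximating $\exp^{-1}_{q_{\lambda,g}}\circ h_g$ with $h_g\triangleq\iota_0(g^{-1})\circ h\circ\iota_1(g)$, while $\{\iota_1(g)^{-1}p_\lambda\}$ is again a $1$-net on $(M,g_1)$. Thus $f_g$ is precisely the glued map of Section~3 associated to the data $\{\phi_{\lambda,g}\}$, $\{f_{\lambda,g}\}$, $\{w_{\lambda,g}\}$ (with $k=n$ here).

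Next I would apply Lemma~\ref{Jacobian} to $f_g$ at $p$. For each $\lambda$ with $g\in G_\lambda(2.5)$ this yields constants and a $(\Psi(\delta|n),n)$-splitting map $B_1(p)\to T_{f_g(p)}(M,g_0)$ satisfying the analogues of (\ref{Jacobian}.1)--(\ref{Jacobian}.3). All the identifications needed to reach the statement of Lemma~\ref{Jacobian0} — transporting $T_{f_g(p)}(M,g_0)$ to $T_{F(p)}(M,g_0)$ and matching normal coordinates at $f_g(p)$ with those at $F(p)$ — are $C^1$-$\Psi(\delta|n)$-close to linear isometries, since $d_{g_0}(f_g(p),F(p))\le\Psi(\delta|n)$ and $\inj_{g_0}\ge\delta^{-1}$; hence they leave the splitting-map property intact and only perturb the constants within the error $\Psi(\delta|n)$, producing $v_{\lambda,g}$ and $C^\alpha_{\lambda,\beta}(g)$ with (\ref{Jacobian0}.1), (\ref{Jacobian0}.3), (\ref{Jacobian0}.5). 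For (\ref{Jacobian0}.4) I would combine the $\delta$-equivariant closeness, which gives $d_{g_0}(h_g,\id_M)\le\Psi(\delta|n)$, with the facts that $f_g(p)$ and $F(p)$ are both $\Psi(\delta|n)$-close to $p$ ($F(p)$ being a barycenter of points $\Psi(\delta|n)$-close to $p$ in a uniformly convex ball of $(M,g_0)$) and that $\exp^{-1}$ on $(M,g_0)$ is $(1+\Psi(\delta|n))$-bi-Lipschitz on a $2$-ball, to replace $\exp^{-1}_{f_g(p)}\circ h_g$ by $\exp^{-1}_{F(p)}$ up to $\Psi(\delta|n)$.

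For (\ref{Jacobian0}.2): if $g\notin G_\lambda(2.3)$ then $d_{g_1}(p_\lambda,\iota_1(g)p)\ge2.3$, and since $u_\lambda$ approximates $\exp^{-1}_{h(p_\lambda)}\circ h$ with $h=\id_M$ a $\delta$-GHA, $r_\lambda(u_\lambda(\iota_1(g)p))\ge2.3-\Psi(\delta|n)>2$; as $\supp\phi\subset[0,2]$, both $\phi_{\lambda,g}(p)$ and $\phi'$ at $r_\lambda(u_\lambda(\iota_1(g)p))$ vanish, so the summand $A_{\lambda,\beta,\zeta}$ in the formula $C^\alpha_{\lambda,\beta}=-K^{\alpha,\beta}A_{\lambda,\beta,\zeta}$ from the proof of Lemma~\ref{Jacobian} vanishes, whence $C^\alpha_{\lambda,\beta}(g)=0$ on $G_\lambda(2.3)^c$. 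One then defines $C^\alpha_{\lambda,\beta}$ to be $0$ off $G_\lambda(2.5)$; since it already vanishes on the collar $G_\lambda(2.3)^c\cap G_\lambda(2.5)$, this gives a smooth function $G\to\R$, and $v_\lambda$ is only needed on $G_\lambda(2.5)\times B_1(p,g_1)$, where it is already defined. Smoothness in $g$ is then routine: both $G$-actions being smooth, $(x,g)\mapsto\phi_{\lambda,g}(x)$, $(x,g)\mapsto f_{\lambda,g}(x)$, $g\mapsto q_{\lambda,g}$, $g\mapsto w_{\lambda,g}$ are smooth; the strictly convex minimizer $f_g(p)$ depends smoothly on $g$ by the implicit function theorem, its $y$-Hessian being $\Psi(\delta|n)$-close to $D_g(p)\,\id$ hence invertible; consequently the Schmidt orthonormalization producing $\omega_\lambda$ (hence $v_{\lambda,g}$), the mixed derivatives $A_{\lambda,\beta,\zeta}(g)$ and the inverse Hessian $K^{\alpha,\beta}(g)$ all depend smoothly on $g$, and therefore so do $C^\alpha_{\lambda,\beta}(g)$ and $v_\lambda(g,x)$.

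I expect the main obstacle to be bookkeeping rather than a new idea: one must verify that every $\Psi(\delta|n)$ produced by the proof of Lemma~\ref{Jacobian} is uniform in $g$. This holds because the only $g$-dependent ingredients entering that proof — the centers $q_{\lambda,g}$, the transported net $\{\iota_1(g)^{-1}p_\lambda\}$, and the basepoint $f_g(p)$ — simply range over $M$, while all the estimates used there (the Hessian comparison (\ref{Estimate}), the $C^1$-closeness (\ref{C^1closedness}), and the splitting-map bounds) are uniform over $M$ given $|\sec_{g_0}|\le\delta$ and $\inj_{g_0}\ge\delta^{-1}$.
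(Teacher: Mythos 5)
Your proposal is correct and follows essentially the same route as the paper, which for this lemma merely says ``replace $h$, $\phi_\lambda$, $u_\lambda$, $f_\lambda$, $E$ by their $g$-rotated versions and apply the same construction and calculation as in Lemma~\ref{Jacobian}.'' You have simply made explicit the details the paper omits (the base-point transport from $T_{f_g(p)}$ to $T_{F(p)}$ using $d_{g_0}(f_g(p),F(p))\le\Psi(\delta|n)$, the vanishing of $A_{\lambda,\beta,\zeta}$ outside $G_\lambda(2.3)$ via the support of $\phi$ and $\phi'$, smoothness in $g$ via the implicit function theorem, and uniformity of the $\Psi(\delta|n)$'s over $g$), all of which are consistent with the paper's argument.
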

	
	Note that by (\ref{Jacobian0}.2), we don't need to define $v_\lambda$ for those $\lambda$ with $G_\lambda(2.5)=\emptyset$.
	
	\begin{proof}

		For fixed $g\in G$, replace $h$, $\phi_\lambda$, $u_\lambda$, $f_{\lambda}$ and $E$ used in the proof of Lemma \ref{Jacobian} by their $g$-rotated versions, that is, $$h_g\triangleq\iota_0(g^{-1})\circ\iota_1(g),\quad \phi_{\lambda,g}\triangleq\phi_\lambda\circ\iota_1(g),\quad u_{\lambda,g}\triangleq\iota_0(g^{-1})_*\circ u_{\lambda}\circ\iota_1(g),$$$$ f_{\lambda,g}\triangleq\iota_0(g^{-1})\circ f_{\lambda}\circ\iota_1(g),\quad E_g(x,y)\triangleq\frac 12\sum_\lambda\phi_{\lambda,g}(x)d_{g_0}(f_{\lambda,g}(x),y)^2.$$Note that $f_g(x)$ is the unique point at which $E_g(x,\cdot)$ achieves the minimum. Hence we can apply the same construction and calculation as in Lemma \ref{Jacobian} to yield Lemma \ref{Jacobian0}, which we omit the details here.
		
	\end{proof}

	\begin{proof}[Proof of Theorem \ref{GroupStatbility}]

		Fix $p\in (M,g_1)$, a coordinate $x^1,...,x^n$ near $p$, and a normal coordinate $y^1,...,y^n$ on $T_{F(p)}(M,g_0)$. By implicit function theorem,
		
		\begin{equation}\label{JF}
		\frac{\partial F^\alpha}{\partial x^a}(p)=-\frac 12\bar K^{\alpha,\beta}\int_G\frac{\partial^2 d_{g_0}^2(y_2,y_1)}{\partial y_2^\gamma\partial y_1^\beta}\frac{\partial f_g^\gamma}{\partial x^a}\bigg|_{\substack{x=p\\y_1=F(p)\\y_2=f_g(p)}}\myd g,
		\end{equation}
		where $\bar K^{\alpha,\beta}$ is the inverse matrix of $\frac{\partial^2\bar E}{\partial y^\alpha\partial y^\beta}\bigg|_{\substack{x=p\\y=F(p)}}$. We have
		\begin{equation}\label{KEstimate}
		\left|\bar K^{\alpha,\beta}-\delta^{\alpha,\beta}\right|\le\Psi(\delta|n).
		\end{equation}

		By (\ref{Jacobian0}.2), for each fixed $\lambda,\alpha,\beta$, $C_{\lambda,\beta}^\alpha(g) v_{\lambda,g}^\beta(x)$ is a smooth function defined on $G\times B_{1}(p,g_1)$. Hence we can define smooth $v:G\times B_{1}(p,g_1)\to T_{F(p)}(M,g_0)$ by
		
		$$v_g^{\alpha}(x)=\sum_\lambda C_{\lambda,\beta}^\alpha(g) v_{\lambda,g}^\beta(x),$$
		and $v:B_{1}(p)\to T_{F(p)}(M,g_0)$ by	
		$$v^\alpha(x)=-\frac 12\bar K^{\alpha,\beta}\int_G\frac{\partial^2 d_{g_0}^2(y_2,y_1)}{\partial y_2^\gamma\partial y_1^\beta}\bigg|_{\substack{y_1=F(p)\\y_2=f_g(p)}}v_g^\gamma(x)\myd g.$$
		Thus by (\ref{JF}) and (\ref{Jacobian0}.1), $\frac{\partial F^\alpha}{\partial x^a}(p)=\frac{\partial v^\alpha}{\partial x^a}(p)$. What remain is to check the matrix $\frac{\partial v^\alpha}{\partial x^a}(p)$ is non-degenerate.
		
		Combining estimate (\ref{KEstimate}), (\ref{Jacobian0}.4) and (\ref{Jacobian0}.5), for each $x\in B_{1}(p,g_1)$ and those $\lambda,g$ with $g\in G_\lambda(2.5)$, we have
		\begin{equation}\label{7}
		|v_g^\alpha(x)-v_{\lambda,g}^\alpha(x)|\le\Psi(\delta|n),\quad|v^\alpha(x)-v_g^\alpha(x)|\le\Psi(\delta|n).
		\end{equation}
		By (\ref{Jacobian0}.3) and (\ref{7}), $v|_{B_{0.5}(p,g_1)}$ is a $(\Psi(\delta|n),n)$-splitting map, then apply Lemma \ref{NonDegenaracyofSplittingMap} which finishes the proof.
		
	\end{proof}
	
	\subsection{Proof of Corollary \ref{EquivFibration}}\quad
	
	Firstly we need,
	
	\begin{theorem}\cite{MRW}\label{GroupHom}
		Suppose $M_i$ and $N$ are compact $n$-Riemannian manifolds and a $k$-Riemannian manifold ($k\le n$) such that $(M_i,\Gamma_i)\myarrow{GH}(N,\Gamma)$. Then for each large $i$, there exists a Lie group homomorphism $\varphi_i:\Gamma_i\to\Gamma$ which is a $\delta_i$-equivariant GHA, $\delta_i\to0$. Moreover, if $\Ric_{M_i}\ge-(n-1)$ and $k=n$, then $\varphi_i$ is injective.
	\end{theorem}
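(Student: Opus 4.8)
The plan is, for each large $i$, to correct the ``almost homomorphism'' supplied by a $\delta_i$-equivariant GHA into a genuine Lie group homomorphism $\varphi_i$ that is still a $\Psi(\delta_i)$-equivariant GHA, and then, in the non-collapsed case $k=n$, to deduce injectivity of $\varphi_i$ from the rigidity of Lie group endomorphisms near the identity. First I would extract the almost homomorphism. Since $M_i$ and $N$ are compact, $\Gamma_i\subset\Isom(M_i)$ and $\Gamma\subset\Isom(N)$ are compact Lie groups; fix once and for all a bi-invariant metric $d_\Gamma$ on $\Isom(N)$, so that left and right translations and inversion are isometries. Writing the $\delta_i$-equivariant GHA as $(h_i,\varphi^0_i,\psi^0_i)$ (the maps may be taken Borel), the relations of (\ref{almostcommutes}), together with the fact that $h_i$ is a $\delta_i$-GHA and that the factors of $\varphi^0_i,\psi^0_i$ are isometries, yield after the routine manipulation
$$\sup_{\gamma,\gamma'\in\Gamma_i}d_\Gamma\bigl(\varphi^0_i(\gamma\gamma'),\varphi^0_i(\gamma)\varphi^0_i(\gamma')\bigr)\le\Psi(\delta_i),\qquad\sup_{g\in\Gamma}d_\Gamma\bigl(\varphi^0_i(\psi^0_i(g)),g\bigr)\le\Psi(\delta_i);$$
in particular $\varphi^0_i$ is a $\Psi(\delta_i)$-almost homomorphism whose image is a $\Psi(\delta_i)$-dense subset of $\Gamma$.

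The core of the argument is the correction, and I expect it to be the main obstacle. I would fix a faithful unitary representation $\Gamma\hookrightarrow U(m)$ (Peter--Weyl) and regard $\varphi^0_i$ as a $\Psi(\delta_i)$-almost representation $\Gamma_i\to U(m)$. Since $\Gamma_i$ is compact, hence amenable, such an almost representation is $\Psi(\delta_i)$-close (uniformly, in operator norm) to a genuine representation $\rho_i\colon\Gamma_i\to U(m)$; this is the Kazhdan-type stability of $\epsilon$-representations of amenable groups, and its constant is independent of $\Gamma_i$. Then $\rho_i(\Gamma_i)$ is a compact subgroup of $U(m)$ lying Hausdorff-$\Psi(\delta_i)$-close to the fixed compact subgroup $\Gamma$, so by the local rigidity of compact subgroups (Montgomery--Zippin) there is, for large $i$, an element $u_i\in U(m)$ with $d(u_i,e)\le\Psi(\delta_i)$ and $u_i\rho_i(\Gamma_i)u_i^{-1}\subseteq\Gamma$. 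The map $\varphi_i:=u_i\rho_i(\cdot)u_i^{-1}\colon\Gamma_i\to\Gamma$ is then a continuous, hence (by Cartan) smooth, homomorphism with $\sup_\gamma d_\Gamma(\varphi_i(\gamma),\varphi^0_i(\gamma))\le\Psi(\delta_i)$. Replacing $\varphi^0_i$ by $\varphi_i$ in the triple preserves all the relations of (\ref{almostcommutes}) up to $\Psi(\delta_i)$, so $(h_i,\varphi_i,\psi^0_i)$ is a $\Psi(\delta_i)$-equivariant GHA with $\varphi_i$ an honest homomorphism, which proves the first assertion. The delicate point, really the heart of the proof, is that both the stability constant and the radius in the rigidity statement must be uniform in $i$ (independent of the varying source $\Gamma_i$): this is exactly what amenability of $\Gamma_i$ and the fixedness of the target $\Gamma\subset U(m)$ buy us.

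For the ``moreover'' assume $k=n$ and $\Ric_{M_i}\ge-(n-1)$. Then $\vol(M_i)\to\vol(N)>0$ by Colding's volume convergence, so the sequence is non-collapsed and, for large $i$, $M_i$ is diffeomorphic to $N$ via a $\Psi(\delta_i)$-GHA (\cite{CC2},\cite{CJN}); consequently $\Isom(M_i)\to\Isom(N)$ as compact Lie groups with their bi-invariant metrics, so that $\dim\Gamma_i$ and $\#\pi_0(\Gamma_i)$ are bounded, each $\Gamma_i$ admits a faithful unitary representation in a fixed $U(m_0)$, the injectivity radius of $\Gamma_i$ in the metric $d_{\Isom(M_i)}$ is bounded below by a constant $c>0$ independent of $i$, and there is a uniform normal (logarithm) neighbourhood of $e$. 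Applying the correction of the previous paragraph also to $\psi^0_i$ (legitimate, with uniform constants, by these bounded invariants) yields a homomorphism $\psi_i\colon\Gamma\to\Gamma_i$ with $(h_i,\varphi_i,\psi_i)$ still a $\Psi(\delta_i)$-equivariant GHA. Combining the three relations of (\ref{almostcommutes}) for this triple and using that $h_i$ is a $\Psi(\delta_i)$-GHA gives, for every $\gamma\in\Gamma_i$ and $x\in M_i$,
$$d_{M_i}\bigl(\psi_i\varphi_i(\gamma)x,\gamma x\bigr)\le d_N\bigl(h_i(\psi_i\varphi_i(\gamma)x),\varphi_i(\gamma)h_i(x)\bigr)+d_N\bigl(\varphi_i(\gamma)h_i(x),h_i(\gamma x)\bigr)+\Psi(\delta_i)\le\Psi(\delta_i),$$
so the genuine endomorphism $\phi_i:=\psi_i\circ\varphi_i$ of $\Gamma_i$ satisfies $d_{\Isom(M_i)}(\phi_i(\gamma),\gamma)\le\Psi(\delta_i)$ for all $\gamma$, i.e.\ $\phi_i$ is $\Psi(\delta_i)$-$C^0$-close to $\id_{\Gamma_i}$.

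It remains to see that for $i$ large (so that $\Psi(\delta_i)<c$ and $\phi_i$ takes values within the uniform normal neighbourhood), $\phi_i$ is an automorphism of $\Gamma_i$; this then forces $\varphi_i$ injective. Being a homomorphism $C^0$-close to $\id$, $\phi_i$ has differential at $e$ close to the identity (as $\phi_i\circ\exp=\exp\circ\, d(\phi_i)_e$), hence invertible, so $\phi_i$ is a covering onto its image, which is an open subgroup of $\Gamma_i$. On the identity component this gives $\phi_i(\Gamma^0_i)=\Gamma^0_i$ with $\phi_i|_{\Gamma^0_i}$ a covering $\Gamma^0_i\to\Gamma^0_i$; since $\phi_i$ is $C^0$-close to $\id$ it is homotopic to $\id_{\Gamma_i}$ (join $\phi_i(\gamma)$ to $\gamma$ inside the uniform normal neighbourhood), hence induces the identity on $\pi_1(\Gamma^0_i)$, which forces that covering to have a single sheet; and $\phi_i$ is plainly the identity on the finite set $\pi_0(\Gamma_i)$ and bijective on each coset. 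Therefore $\phi_i=\psi_i\circ\varphi_i$ is an automorphism of $\Gamma_i$, so $\varphi_i$ is injective, as claimed.
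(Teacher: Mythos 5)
The paper does not actually prove this statement; it is quoted from \cite{MRW}, and the only comment made is that the sectional curvature hypothesis of the original can be dropped because the limit $N$ is already assumed to be a manifold, so $\Gamma$ is automatically a Lie group. Your construction of $\varphi_i$ is therefore being compared with the argument in \cite{MRW}. That first half of your proposal is essentially sound and is a legitimate variant of the standard one: \cite{MRW} correct the almost homomorphism by an averaging/center-of-mass argument performed directly in the \emph{fixed} target $\Gamma$, whereas you route through a faithful unitary embedding $\Gamma\hookrightarrow U(m)$, Kazhdan's stability of $\epsilon$-representations of amenable groups, and the Montgomery--Zippin nearby-subgroups theorem. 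Both arguments rest on the same correct point, which you identify explicitly: the source is compact (hence carries an invariant mean) and the target is fixed, so all stability constants are uniform in $i$. The only technical debts in your version (measurability of $\varphi_i^0$ so that the corrected homomorphism is continuous, passing to the closure of $\rho_i(\Gamma_i)$ before applying Montgomery--Zippin) are standard and repairable.

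The injectivity argument, however, has a genuine gap. You propose to correct $\psi_i^0$ to a homomorphism $\psi_i\colon\Gamma\to\Gamma_i$ and conclude that $\psi_i\circ\varphi_i$ is an automorphism of $\Gamma_i$. This step fails because the target of $\psi_i^0$ is the \emph{varying} group $\Gamma_i$, and none of the uniform invariants you invoke actually hold. Take $M_i=N$ the flat square $n$-torus, $\Gamma=S^1$ a circle of translations, and $\Gamma_i=\Z/p_i\subset S^1$ with $p_i\to\infty$: all hypotheses of the ``moreover'' clause are satisfied, yet $\#\pi_0(\Gamma_i)=p_i$ is unbounded, the separation between elements of $\Gamma_i$ tends to $0$, and the only homomorphism $S^1\to\Z/p_i$ is trivial, so no $\psi_i$ exists with $\psi_i\circ\varphi_i$ close to $\id_{\Gamma_i}$. (The theorem itself is of course true here: $\varphi_i$ is the inclusion.) So the inference ``$\Isom(M_i)\to\Isom(N)$, hence $\#\pi_0(\Gamma_i)$ and the injectivity radius of $\Gamma_i$ are uniformly controlled'' is false, and the whole second half collapses.

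The correct route to injectivity does not use a reverse homomorphism at all. If $\gamma\in\ker\varphi_i$, then $d(h_i(\gamma x),h_i(x))=d(h_i(\gamma x),\varphi_i(\gamma)h_i(x))\le\delta_i$, so $d(\gamma x,x)\le 2\delta_i$ for all $x\in M_i$: the kernel is a closed subgroup of isometries of uniformly small displacement. When $k=n$ and $\Ric_{M_i}\ge-(n-1)$, volume convergence \cite{Co2} makes the sequence non-collapsed, and a non-collapsed sequence admits no nontrivial ``small subgroups'': $(M_i,\ker\varphi_i)\myarrow{GH}(N,\{e\})$ forces $M_i/\ker\varphi_i\myarrow{GH}N$ as well, which is incompatible with $\vol(M_i)\to\vol(N)>0$ unless $\ker\varphi_i$ is eventually trivial (this is the argument used in \cite{CRX1}). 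You should replace your composition argument with this displacement-plus-volume argument.
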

	
	Note that the original form of Theorem \ref{GroupHom} in \cite{MRW} assumes $\sec_{M_i}\ge-1$. They use this curvature condition to guarantee that $\Gamma$ is a Lie group since their limit may be a singular space. The special case we take here is sufficient for our use.
	
	By the Gromov's compactness criterion and Theorem \ref{GroupHom}, it is clear that under the conditions of Corollary \ref{EquivFibration}, for small $\delta$,
	there is a Lie group homomorphism, $\varphi:G\to H$, such that $(f, \varphi)$ is a $\Psi(\delta|n)$-equivariant GHA, where $f$ is constructed from Theorem \ref{Main}. Then average $\varphi(g^{-1})\circ f\circ g$ over $g\in G$ to obtain an $F$ which will meet our requirement. The proof is the same as that of Theorem \ref{GroupStatbility}, hence we omit the details here. A remark is that in Theorem \ref{GroupStatbility}, $\delta$ does not depend on $G$. While in Corollary \ref{EquivFibration}, the existence of $\varphi:G\to H$ depends on the induced metric on $H$ by the $H$-action on $N$, hence $\delta$ also depends on the $H$-action.

	\subsection{Proof of Corollary \ref{SphereRigidity}}\quad
	
	By the Gromov's compactness criterion, it suffices to consider a sequence of $n$-manifolds $M_i$ satisfying
	$$\Ric_{M_i}\ge (n-1),\quad \vol(\tilde M_i)\ge(1-\delta_i)\vol(S^n),\quad \delta_i\to 0,$$
	and prove that for $i$ large, $M_i$ is diffeomorphic to a space with sectional curvature $1$ by a $\Psi(\delta_i|n)$-GHA.
	
	By diagram (\ref{Equivariantdiagram}), passing to a subsequence, we may assume the following commutative
	equivariant GH-convergence,
	$$
	\xymatrix{
		(\tilde{M}_i,\Gamma_i) \ar[rr]^{GH}\ar[d]_{\pi_i}&&(Y,G) \ar[d]^{\pi_\infty} \\
		M_i\ar[rr]^{GH}&  & X=Y/G,}
	$$
	By quantitative maximal volume rigidity in \cite{Co1}, $Y$ is isometric to the unit sphere
	$S^n_1$. By Theorem \ref{Main} and \ref{GroupHom}, we may assume a diffeomorphism $f_i:\tilde M_i\to S^n_1$ and an injective homomorphism, $\varphi_i: \Gamma_i\to G$ such that $(f_i,\varphi_i)$ is a $\delta_i$-equivariant GHA.
	Applying Theorem \ref{GroupStatbility}, for large $i$, the $\Gamma_i$-action on $S^n_1$ via $f_i$ is conjugate to the $\varphi_i(\Gamma_i)$-action on $S^n_1$ which yields the conclusion.

\bibliographystyle{plain}

\begin{thebibliography}{CCT02}

\bibitem[An]{An} M. T. Anderson.
Hausdorff perturbations of Ricci-Flat manifolds and the splitting theorem. Duke. Math. J., 1992, 68(1), 67--82

\bibitem[Ch]{Ch} J. Cheeger. Degeneration of Riemannian metrics under Ricci curvature bounds. Lezioni Fermiane. [Fermi Lectures] Scuola Normale Superiore, Pisa, 2001

\bibitem[CC1]{CC1} J. Cheeger, T. H. Colding.
Lower bounds on Ricci curvature and the almost rigidity of warped products. Ann. Math., 1996, 144(1): 189--237


\bibitem[CC2]{CC2} J. Cheeger, T. H. Colding.
On the structure of spaces with Ricci curvature bounded below. I., J. Differential Geom., 1997, 46(3): 406--480


\bibitem[CC3]{CC3} J. Cheeger, T. H. Colding.
On the structure of spaces with Ricci curvature bounded below. III., J. Differential Geom., 2000, 54: 37--74

\bibitem[CFG]{CFG} J. Cheeger, K. Fukaya, M. Gromov.
Nilpotent structures and invariant metrics on collapsed manifolds, J. Amer. Math. Soc., 1992, 5(2): 327--372


\bibitem[CJN]{CJN} J. Cheeger, W. Jiang, A. Naber.
Rectifiability of Singular Sets in Noncollapsed Spaces with Ricci Curvature bounded below, arXiv:1805.07988

\bibitem[Co1]{Co1} T. H. Colding.
Shape of manifolds with positive Ricci curvature, Invent. math., 1996, 124(1-3): 175--191


\bibitem[Co2]{Co2} T. H. Colding. Ricci Curvature and Volume Convergence, Ann. of Math. (2), 1997, 145(3): 477--501


\bibitem[CRX1]{CRX1} L. Chen, X. Rong, S. Xu. Quantitative volume space form rigidity under lower Ricci curvature bound, J. Differential Geom., 2019, 113(2): 227--272

\bibitem[CRX2]{CRX2} L. Chen, X. Rong, S. Xu. Quantitative volume space form rigidity under lower Ricci curvature bound II, Trans. Amer. Math. Soc., 2018, 370: 4509--4523

\bibitem[DWY]{DWY} X. Dai, G. Wei, R. Ye. Smoothing Riemannian metrics with Ricci curvature bounds, Manuscripta Math., 1996, 90(1): 49--61

\bibitem[Fu1]{Fu1} K. Fukaya. Collapsing of Riemannian manifolds to ones of lower dimensions, J. Differential Geom., 1987, 25(1): 139--156

\bibitem[Fu2]{Fu2} K. Fukaya. A boundary of the set of Riemannian manifolds with bounded curvature and diameter, J. Differential Geom., 1988, 28(1): 1--21

\bibitem[FY]{FY} K. Fukaya, T. Yamaguchi. The fundamental groups of almost nonnegatively curved manifolds, Ann. of Math. (2), 1992, 136(2): 253--333

\bibitem[Gr]{Gr} M. Gromov. Almost flat manifolds, J. Differential Geom., 1978, 13: 231--241

\bibitem[GK]{GK} K. Grove, H. Karcher. How to conjugate \text{$C^1$}-close group actions, Math. Z., 1973, 132: 11--20

\bibitem[HR]{HR} H. Huang, X. Rong. Nilpotent structures on collapsed manifolds with Ricci bounded below and local rewinding non-collapsed, in preparation

\bibitem[HKRX]{HKRX} H. Huang, L. Kong, X. Rong, S. Xu. Collapsed Manifolds with Ricci Bounded Covering Geometry, arXiv:1808.03774

\bibitem[MRW]{MRW} M. Masur, X. Rong, Y. Wang. Margulis Lemma for compact Lie groups, Math. Z., 2008, 258: 395--406

\bibitem[NZ]{NZ} A. Naber, R. Zhang. Topology and $\varepsilon$-regularity theorems on collapsed manifolds with Ricci curvature bounds, Geom. Topol., 2016, 20(5): 2575--2664

\bibitem[Pa]{Pa} R.S. Palais. Equivalence of nearby differentiable actions of a compact group, Bull.Amer.math.Soc., 1961, 67(4): 362--364


\bibitem[Pa1]{Pa1} J. Pan. Nonnegative Ricci curvature, stability at infinity and finite generation of fundamental groups, Geom. Topol., 2019, 23: 3203--3231

\bibitem[Pa2]{Pa2} J. Pan. Nonnegative Ricci curvature, almost stability at infinity, and structure of fundamental groups, arXiv:1809.10220

\bibitem[PR]{PR} J. Pan, X. Rong. Ricci curvature and isometric actions with scaling nonvanishing property, arXiv:1808.02329


\bibitem[PWY]{PWY} P. Petersen, G. Wei, R. Ye. Controlled geometry via smoothing, Comment. Math. Helv., 1999, 74: 345--363

\bibitem[Ro1]{Ro1} X. Rong. Convergence and collapsing theorems in Riemannian geometry, Handbook of Geometric Analysis, Higher Education Press and International Press, Beijing-Boston, II ALM 13 2010

\bibitem[Ro2]{Ro2} X. Rong. Manifolds of Ricci curvature and local rewinding volume bounded below (in Chinese), Sci Sin Math,
2018, 48: 791--806

\bibitem[Ro3]{Ro3} X. Rong. A new proof of the Gromov's theorem on almost flat manifolds, arXiv:1906.03377

\bibitem[Ro4]{Ro4} X. Rong, Maximally collapsed manifolds with Ricci curvature and local rewinding volume bounded below, to appear

\bibitem[Ru]{Ru} E. Ruh. Almost flat manifolds, J. Differential Geom., 1982, 17: 1--14


\end{thebibliography}

\end{document}